\documentclass[11pt]{article}
 \usepackage{url}
 \usepackage{color}
 \usepackage{parskip}
\usepackage{amsmath,amssymb}
 \usepackage{amsthm}
\usepackage[colorlinks=true,linkcolor=black,urlcolor=black]{hyperref}
 \usepackage{algorithm,algpseudocode}
\usepackage[labelfont=bf, justification=justified]{caption}
\usepackage{todonotes}
\usepackage{amssymb}
\usepackage{csvsimple}
 \usepackage[top=3cm,bottom=3cm,left=3cm,right=3cm]{geometry}
\usepackage{pgfplots}
\pgfplotsset{compat=newest}       
\usepgfplotslibrary{groupplots}
\usepackage{pgfplotstable}
\usepackage{bbm}
\newcounter{tikzsubfigcounter}[figure]
\renewcommand{\thetikzsubfigcounter}{\the\numexpr\value{figure}+1\relax\alph{tikzsubfigcounter}}

\newcommand{\tikztitle}[1]{ %
	\refstepcounter{tikzsubfigcounter}
	\textbf{(\alph{tikzsubfigcounter})}\space\space #1 
}

\newcounter{tikzsubfigcounterinvisible}[figure]
\renewcommand{\thetikzsubfigcounterinvisible}{\the\numexpr\value{figure}+1\relax\alph{tikzsubfigcounterinvisible}}

\newcommand{\settikzlabel}[1]{ %
	\refstepcounter{tikzsubfigcounterinvisible} \label{#1} 
}

\newcommand{\nsamples}{K}
\newcommand{\convexsum}{\sum \limits_{\idx \in \idxset} \weight{\sampleidx}}
\newcommand{\statsol}[1]{\mu_{#1}}

\newcommand{\approxstatsol}[1]{\mu_{#1}^{\nsamples,h}}
\newcommand{\convexstatsol}[1]{\mu_{\sampleidx,#1}^\ast}
\newcommand{\smoothapprox}[1]{\hat{\mu}_{#1}^\nsamples}
\newcommand{\smoothapproxinitial}{\hat{\bar{\mu}}^\nsamples}
\newcommand{\sampleidx}{\idx}
\newcommand{\dirac}[1]{\delta_{#1}}
\newcommand{\wasserstein}[1]{\ensuremath{W}_{#1}}
\newcommand{\initial}{\bar{\mu}}
\newcommand{\starinitial}{\overline{\mu}^{\ast}}
\newcommand{\cF}{\mathcal{F}}
\newcommand{\dmu}[1]{\mathrm{d}\mu_{\sampleidx,#1}^\ast(\sol)}
\newcommand{\dmubar}{\mathrm{d}\bar{\mu}_{\sampleidx}^\ast(\bar{\sol})}
\newcommand{\residual}{\mathcal{R}^{st}}
\newcommand{\residualsample}[1]{\mathcal{R}^{st}_{#1}}
\newcommand{\reconstruction}{\solution^{st}}
\newcommand{\reconstsample}[1]{\solution^{st}_{#1}}
\newcommand{\initialreconstsample}[1]{\bar{\solution}^{st}_{#1}}
\newcommand{\weight}[1]{w_{#1}}

\newcommand{\flux}{f}
\newcommand*\samethanks[1][\value{footnote}]{\footnotemark[#1]}

\newcommand{\sstint}[1]{\int \limits_0^\finaltime \int \limits_{\cF} \int \limits_{\physicalspace}
#1 ~ \dx \dmu{\timevar} \dt}
\newcommand{\sstinttime}[2]{\int \limits_0^{#2} \int \limits_{\cF} \int \limits_{\physicalspace}
#1 ~ \dx \dmu{\timevar} \dt}
\newcommand{\sstinttimemidle}[3]{\int \limits_0^{#2} #3 
\int \limits_{\cF} \int \limits_{\physicalspace} #1 ~ \dx \dmu{\timevar} \dt}
\newcommand{\stinttime}[2]{\int \limits_0^{#2} \int \limits_{\physicalspace} #1 ~ \dx \dt}

\newcommand{\sstintmujan}[2]{\int \limits_0^\finaltime \int \limits_{#2} \int \limits_{\physicalspace}
#1 ~ \dx \mathrm{d}\mu_{\timevar}(\sol) \dt}

\newcommand{\ssint}[1]{\int \limits_{\cF} \int \limits_{\physicalspace}
#1 ~ \dx \dmubar}
\newcommand{\ssinttime}[2]{\int \limits_{\cF} \int \limits_{\physicalspace}
#1 ~ \dx \dmu{#2}}

\newcommand{\ssintmujan}[2]{\int \limits_{#2} \int \limits_{\physicalspace}
#1 ~ \dx \mathrm{d}\bar{\mu}(\bar{\sol})}

\newtheorem{theorem}{Theorem}[section]
\newtheorem{definition}[theorem]{Definition}
\newtheorem{lemma}[theorem]{Lemma}

\newtheorem{remark}[theorem]{Remark}

\newcommand{\solution}{u}
\newcommand{\sol}{u}
\newcommand{\test}{v}
\newcommand{\numsol}{u_h}
\newcommand{\numsample}[1]{\sol_{h,#1}}
\newcommand{\initialsol}{\bar{\sol}}
\DeclareMathOperator{\D}{D}
\newcommand{\statespace}{\mathcal{U}}
\newcommand{\compactset}{\mathcal{C}}
\newcommand{\ncls}{\ensuremath{m}}

\newcommand{\dgpolydeg}{p}
\newcommand{\idx}{\mathrm{k}}
\newcommand{\idxset}{\mathcal{K}}
\newcommand{\idxtwo}{\mathrm{m}}
\newcommand{\idxsettwo}{\mathcal{M}}

\newcommand{\physicalspace}{\ensuremath{D}}
\newcommand{\finaltime}{\ensuremath{T}}
\newcommand{\entropy}{\eta}
\newcommand{\entropyflux}{\ensuremath{q}}
\newcommand{\hessian}[2]{\ensuremath{H}_{#1}#2(#1)}
\newcommand{\hessianjan}[3]{\ensuremath{H}_{#1}#2(#3)}

\newcommand{\lowereta}{C_{\underline{\eta},\mathcal{C}}}
\newcommand{\uppereta}{C_{\overline{\eta},\mathcal{C}}}
\newcommand{\fluxconstant}{C_{\overline{\flux},\mathcal{C}}}

\newcommand{\deriv}[1]{\partial_{#1}}
\newcommand{\R}{\mathbb{R}}
\newcommand{\N}{\mathbb{N}}
\renewcommand{\P}{\mathbb{P}}
\newcommand{\cB}{\mathcal{B}}
\newcommand{\cM}{\mathcal{M}}

\newcommand{\cP}{\mathcal{P}}
\newcommand{\domain}{\ensuremath{D}}
\DeclareMathOperator{\dbracei}{[\![}
\DeclareMathOperator{\dbraceo}{]\!]}
\newcommand{\1}[1]{\mathbbm{1}_{#1}}
\newcommand{\Lp}[2]{L^{#1}(#2)}
\newcommand{\Lptwo}[3]{L^{#1}(#2;#3)}

\newcommand{\Ck}[3]{C^{#1}(#2;#3)}
\newcommand{\lpidx}{\ensuremath{r}}

\newcommand{\timevar}{\ensuremath{t}}
\newcommand{\x}{\ensuremath{x}}
\newcommand{\dx}{\mathrm{d}\x}
\newcommand{\dt}{\mathrm{d}\timevar}

\newcommand{\T}{\mathcal{T}}
\newcommand{\meshidx}{l}
\newcommand{\tidx}{n}
\newcommand{\ncells}{N_s}
\newcommand{\ntcells}{N_t}
\newcommand{\cell}{I}
\newcommand{\DG}[1]{V_h^{#1}}
\renewcommand{\t}[1]{\ensuremath{t}_{#1}}
\newcommand{\deltat}[1]{\Delta \t{#1}}
\newcommand{\numflux}{F}
\newcommand{\rkorder}{R}
\newcommand{\stages}{S}
\newcommand{\limiter}{\Pi_h}
\newcommand{\stageidx}{s}


\newcommand{\detresidual}{\mathcal{E}^{\text{det}}}
\newcommand{\detinitial}{\mathcal{E}_0^{\text{det}}}
\newcommand{\stochinitial}{\mathcal{E}_0^{\text{stoch}}}

\newcommand{\secref}[1]{Section~\ref{#1}}

\newcommand{\thmref}[1]{Theorem~\ref{#1}}

\newcommand{\defref}[1]{Definition~\ref{#1}}

\newcommand{\figref}[1]{Figure~(\ref{#1})}
\newcommand{\tabref}[1]{Table~\ref{#1}}
\newcommand{\algoref}[1]{Algorithm~\ref{#1}}

\newlength{\figureheight}
\newlength{\figurewidth}
\setlength{\figureheight}{4cm}
\setlength{\figurewidth}{4cm}

\begin{document}
\title{Error control for statistical solutions}
\author{
Jan Giesselmann\thanks{Department of Mathematics, 
TU Darmstadt, Dolivostra\ss e 15,  64293 Darmstadt, Germany.} 
\and Fabian Meyer\thanks{Institute of Applied Analysis and Numerical Simulation, 
University of Stuttgart, 
Pfaffenwaldring 57, 70569 Stuttgart, Germany.
\newline F.M., C.R. thank the Baden-W{\"u}rttemberg Stiftung for support via the project ''SEAL``. J.G. thanks 
the German Research Foundation (DFG) for support of the project via DFG grant GI1131/1-1. $^\#$fabian.meyer@mathematik.uni-stuttgart.de}~$^{,\#}$
\and Christian Rohde\samethanks }
\date{\today}
\providecommand{\keywords}[1]{{\textit{Key words:}} #1\\ \\}
\providecommand{\class}[1]{{\textit{AMS subject classifications:}} #1}
\maketitle
\raggedbottom
\begin{abstract} \noindent
Statistical solutions have recently been introduced as a an alternative solution framework for 
hyperbolic systems of conservation laws.
In this work we derive a novel a posteriori error estimate in the Wasserstein distance
between dissipative statistical solutions 
and numerical approximations, which rely on so-called regularized empirical measures.
The error estimator can be split into deterministic parts which correspond to spatio-temporal
approximation errors and a stochastic part which reflects the stochastic
error.
We provide numerical experiments which examine the scaling 
properties of the residuals and verify their splitting.
\end{abstract}
\keywords{hyperbolic conservation laws, statistical solutions,
 a posteriori error estimates, discontinuous Galerkin method}
\class{Primary: 35L65, 65M15; Secondary: 65M60, 65M700}

\section{Introduction}
Analysis and numerics of hyperbolic conservation laws have seen a significant shift of paradigms in the last decade. 
The investigation and approximation of entropy weak solutions was state of the art for a long time.
This has changed due to two reasons. Firstly, analytical insights \cite{Chiodaroli2014,DeLellisSzekelyhidi2010} revealed that weak entropic solutions to the Euler equations in several space dimensions are not unique. 
Secondly, numerical experiments \cite{FjordholmKaeppeliMishraTadmor2017,Glimm2015} have shown that in simulations of e.g.~the Kelvin-Helmholtz  instability, numerical solutions do not converge under mesh refinement. In contrast, when families of simulations with slightly varying initial data are considered, averaged quantities like 
mean, variance and also higher moments are observed to converge under mesh refinement.
This has led to several weaker (more statistics inspired) notions of solutions being proposed.
We would like to mention dissipative measure valued solutions \cite{FeireislLukacova2019} and statistical solutions \cite{FjordholmLyeMishraWeber2019}.
Considering measure valued solutions has a long history in hyperbolic conservation laws and can be traced back to the works of DiPerna \cite{DiPerna1985} considering Young measures.
Statistical solutions
are time-parametrized probability measures on spaces of integrable functions and have
been introduced recently  for scalar problems in \cite{FjordholmLanthalerMishra2017} and for systems in \cite{FjordholmLyeMishraWeber2019}.
We would like to mention that in the context of the incompressible Navier-Stokes equations and turbulence, statistical solutions have a long history going back to the seminal work of 
Foias et al., see \cite{Foias2001} and references therein.

The precise definition of statistical solutions is based on an equivalence theorem \cite[Theorem 2.8]{FjordholmLyeMishraWeber2019} that
relates probability measures on spaces of integrable functions and {\it correlation measures} on the state space.
Correlation measures are measures that determine joint probability distributions of some unknown quantity at any finite collection of spatial points.
In this sense, statistical solutions contain more information than e.g. dissipative measure valued solutions and, indeed, for scalar problems uniqueness of entropy dissipative statistical solutions can be proven.
This proof is similar to the classical proof of uniqueness of  weak solutions for scalar problems.
In contrast, for systems in multiple space dimensions the non-uniqueness of entropy weak solutions immediately implies non-uniqueness of dissipative measure valued solutions and statistical solutions.
Still, all these concepts satisfy weak-strong uniqueness principles, i.e., as long as a Lipschitz continuous solution exists in any of these classes it is the unique solution in any of these classes.
The (technical) basis for obtaining weak-strong uniqueness results is based on the 
relative entropy framework of Dafermos and DiPerna \cite{Dafermos2016}, which can be extended to dissipative statistical solutions as in \cite{FjordholmLyeMishraWeber2019}.

Convergence of numerical schemes for nonlinear systems of hyperbolic conservation laws is widely open (except when the solution is smooth). 
Exceptions are the one dimensional situation, where convergence of the Glimm scheme is well known \cite{Glimm1965} and is, indeed, used for constructing the standard Riemann semigroup \cite{Bressan1995}.
For multi-dimensional problems, there is recent progress showing convergence of numerical schemes towards dissipative measure valued solutions \cite{FeireislLukacova2019} with more information on the convergence in case the limit is an entropy weak solution.
It seems impossible to say anything about convergence rates in this setting due to the multiplicity of entropy weak solutions.

The work at hand tries to complement the a priori analysis from \cite{FjordholmLyeMishraWeber2019} with a reliable a posteriori error estimator,
i.e., we propose a computable upper bound for the numerical approximation error of statistical solutions. 
This extends results for entropy weak solutions of deterministic and random systems of hyperbolic conservation laws 
\cite{GiesselmannDedner16,GiesselmannMeyerRohdeSG19} towards statistical solutions.
One appealing feature of our error estimator is that it can be decomposed into different parts corresponding to space-time and stochastic errors.
Our analysis relies on the relative entropy framework of Dafermos and DiPerna \cite{Dafermos2016}
extended to statistical solutions.
We would like to mention that there are also other frameworks for providing a posteriori error analysis for hyperbolic systems, namely \cite{Adjerid2002}, \cite{HartmannHouston2002} and
\cite{Laforest2004} for one-dimensional systems.

The structure of this work is as follows: \secref{sec:prelim} reviews the
notion of (dissipative) statistical solutions of hyperbolic conservation laws from \cite{FjordholmLyeMishraWeber2019}. \secref{sec:scheme} is concerned with the 
numerical approximation of dissipative statistical solutions using empirical measures.
Moreover, we recall a reconstruction procedure which allows us to define the so-called regularized
empirical measure.
In \secref{sec:apost}, we present our main a posteriori error estimate between 
a dissipative statistical solution and its numerical approximation using 
the regularized empirical measure.
Section \ref{jansfavouritediscussion} discusses why defining statistical solutions to general systems like the Euler equations is not straightforward.
The technical difficulties in defining statistical solutions for general systems vanish when attention is restricted to solutions that take values in some compact subset of the state space.
We explain in Section \ref{jansfavouritediscussion} that our a posterirori error analysis directly extends to such solutions.
Finally, \secref{sec:numerics} provides some numerical experiments examining and verifying the convergence and splitting of the error estimators.

\section{Preliminaries and notations} \label{sec:prelim}

We consider the following one-dimensional system of $\ncls\in \N$ nonlinear conservation
laws:
\begin{align} \label{eq:conslaw} 
\begin{cases}
\partial_t \sol(t,x) + \partial_x  \flux(\sol(t,x))= 0,
 &( t,x)  \in   (0,T) \times \domain,
\\ \sol(0,x)= \bar{\sol}(x),  &x \in 
 \domain.
\end{cases}
\end{align} 
Here, $u(t,x)\in \statespace \subset \R^\ncls$ is the vector of conserved quantities,
$\mathcal{U}$ is an open and convex set that is called {\it state space}, 
$\flux: \statespace \to \R^m$ is the flux function, $\domain \subset \R$ is the 
spatial domain and $T \in \R_+$. We restrict ourselves to the case where $D=(0,1)$ with periodic boundary conditions.
The system \eqref{eq:conslaw} is called hyperbolic
if for any $u \in \statespace$ the flux Jacobian $\D F(u)$ has $\ncls$ real eigenvalues and admits a basis of eigenvectors.
We assume that \eqref{eq:conslaw} is equipped with an entropy/entropy flux pair
$(\entropy,\entropyflux)$, where the strictly convex function $\entropy \in \Ck{2}{\statespace}{\R}$ 
and $\entropyflux \in \Ck{2}{\statespace}{\R}$ satisfy $\D \entropyflux = \D \entropy \D\flux.$

Most literature on numerical schemes for hyperbolic conservation laws focuses on computing numerical approximations of  entropy weak solutions of \eqref{eq:conslaw}. In contrast, we are 
interested in computing statistical solutions. 
We recall the definition of statistical solutions in subsection \ref{ssec:statsol}. 
It is worthwhile to note that statistical solutions were only defined for systems for which $\statespace =\R^m$ in \cite{FjordholmLyeMishraWeber2019}.
We restrict our analysis to this setting in Sections \ref{ssec:statsol} and \ref{sec:apost}. 
In Section \ref{jansfavouritediscussion}, we discuss why we believe defining statistical solutions to general systems is not straightforward.

\subsection{Statistical solutions}\label{ssec:statsol}
In this section, we provide a brief overview of the notions needed to define statistical solutions for \eqref{eq:conslaw},
following the exposition in \cite{FjordholmLyeMishraWeber2019}, referring to 
\cite[Section 2]{FjordholmLyeMishraWeber2019} for more background, details and proofs.

Let us introduce some notation: For any topological space $X$ let $\cB(X)$ denote the Borel $\sigma$-algebra on $X$ and $\cM(X)$
denotes the set of signed Radon measures on $(X,\cB(X))$. In addition, $\cP(X)$ denotes the set of probability
measures on $(X,\cB(X))$, i.e., all non-negative $\mu \in \cM(X)$ satisfying $\mu(X)=1.$
We consider $\statespace = \R^m$ and choose $p\in [1,\infty)$ minimal, such that
$$|\flux(\sol)|,|\entropy(\sol)|,|\entropyflux(\sol)|\leq C(1+ |\sol|^p), \quad \forall  \sol \in \statespace$$
holds for some constant $C>0$. 
The following classical theorem states the duality between  $\Lp{1}{D^k; C_0(\statespace^k)}$
and $ L^\infty (D^k ; \cM(\statespace^k))$.
\begin{theorem}\label{thm:dualspace}[\cite{Ball1989}, p. 211]
For any $k \in \N$ the dual space of $\mathcal{H}_0^k(D,\statespace):= \Lp{1}{D^k; C_0(\statespace^k)}$ is 
$\mathcal{H}_0^{k*}(D,\statespace) := L^\infty (D^k ; \cM(\statespace^k))$, i.e.,  the space
of bounded, weak*-measurable maps from $D^k$ to $\cM(\statespace^k)$
under the duality pairing
\[ \langle \nu^k , g \rangle _{\mathcal{H}^k} := \int_{D^k} \langle \nu^k_x , g(x) \rangle \, \operatorname{d}\!x := \int_{D^k} \int_{\statespace^k} g(x)(\xi)\, d\nu^k_x(\xi) \, \operatorname{d}\!x .\]
\end{theorem}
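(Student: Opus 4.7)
The plan is to follow the classical Dinculeanu-type argument identifying the dual of a Bochner $L^1$ space whose target Banach space has a separable predual. Since the statement is a known theorem from \cite{Ball1989}, I will sketch the architecture rather than grind through the technicalities.

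First, I would verify the easy direction: every $\nu^k$ that is weak$^\ast$-measurable and essentially bounded as a map $D^k \to \cM(\statespace^k)$ induces a bounded linear functional on $\mathcal{H}_0^k(D,\statespace)$ via the stated pairing. For $g \in \Lp{1}{D^k;C_0(\statespace^k)}$ the integrand $x \mapsto \langle \nu^k_x, g(x)\rangle$ is measurable (approximate $g$ by $C_0(\statespace^k)$-valued simple functions and invoke weak$^\ast$-measurability of $\nu^k$), and the estimate
\[
\Bigl| \int_{D^k} \langle \nu^k_x, g(x)\rangle \, \mathrm{d}x \Bigr| \le \|\nu^k\|_{L^\infty(D^k;\cM(\statespace^k))}\, \|g\|_{\Lp{1}{D^k;C_0(\statespace^k)}}
\]
yields the norm bound on the induced functional.

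For the non-trivial direction, given $\Lambda \in \mathcal{H}_0^{k\ast}$ I would construct a representative $\nu^k$ in two stages. For each Borel set $A \subset D^k$ of finite Lebesgue measure, the linear form $\varphi \mapsto \Lambda(\mathbf{1}_A \varphi)$ is bounded on $C_0(\statespace^k)$, so the Riesz--Markov--Kakutani theorem produces a unique $m_A \in \cM(\statespace^k)$ with $\Lambda(\mathbf{1}_A\varphi) = m_A(\varphi)$. For fixed $\varphi$ the scalar set function $A \mapsto m_A(\varphi)$ is countably additive and absolutely continuous with respect to Lebesgue measure, with total variation bounded by $\|\Lambda\|\,\|\varphi\|_\infty$, so Radon--Nikodym furnishes a density $h_\varphi \in L^\infty(D^k)$ satisfying $\|h_\varphi\|_\infty \le \|\Lambda\|\,\|\varphi\|_\infty$ and $m_A(\varphi) = \int_A h_\varphi(x)\,\mathrm{d}x$.

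The main obstacle, and the step that really needs care, is producing a single pointwise measure $\nu^k_x$ such that $h_\varphi(x) = \int_{\statespace^k} \varphi \, \mathrm{d}\nu^k_x$ for almost every $x$, because a priori the exceptional null set depends on $\varphi$. I would resolve this by using separability of $C_0(\statespace^k)$ (which holds since $\statespace^k = \R^{mk}$ is $\sigma$-compact, hence second countable): pick a countable $\mathbb{Q}$-linear dense subset $\{\varphi_n\}_{n \in \N} \subset C_0(\statespace^k)$ and discard the countable union of null sets on which the assignment $\varphi_n \mapsto h_{\varphi_n}(x)$ fails either to be $\mathbb{Q}$-linear or to obey $|h_{\varphi_n}(x)| \le \|\Lambda\|\,\|\varphi_n\|_\infty$. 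Off this combined null set the assignment extends by continuity to a bounded linear functional on $C_0(\statespace^k)$ of norm at most $\|\Lambda\|$, and another application of Riesz--Markov--Kakutani promotes it to a measure $\nu^k_x \in \cM(\statespace^k)$. Weak$^\ast$-measurability of $x \mapsto \nu^k_x$ is immediate from the measurability of each $h_{\varphi_n}$, and the bound gives $\|\nu^k\|_{L^\infty(D^k;\cM(\statespace^k))} \le \|\Lambda\|$. A density argument in $g$ finally extends the identity $\Lambda(g) = \int_{D^k} \langle \nu^k_x, g(x)\rangle\,\mathrm{d}x$ from the linear span of $\mathbf{1}_A \varphi$ to all of $\mathcal{H}_0^k$, closing the isometric identification.
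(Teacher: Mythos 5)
The paper does not prove this statement at all; it is imported verbatim from \cite{Ball1989} (who in turn relies on the classical duality theory for $L^1$ of a Banach-space-valued function with separable target, as in Edwards/Dinculeanu). Your sketch reproduces exactly that standard argument --- Riesz--Markov--Kakutani plus Radon--Nikodym to get densities $h_\varphi$, then separability of $C_0(\statespace^k)$ to collapse the $\varphi$-dependent null sets into one --- and it is correct as an architecture for the proof.
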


\begin{definition}\label{def:corrmeasure}[Correlation measures \cite[Def 2.5]{FjordholmLanthalerMishra2017}]
 A $p$-summable correlation measure is a family $\boldsymbol{\nu}=(\nu^1, \nu^2,\dots )$ with $\nu^k \in \mathcal{H}_0^{k*}(D;\statespace) $ satisfying for all $k \in \N$
 the following properties:
 \begin{enumerate}
  \item[(a)] $\nu^k$ is a Young measure from $D^k$ to $\statespace^k$.
   \item[(b)] \textit{Symmetry}: If $\sigma$ is a permutation of $\{1,\dots,k\} $, i.e., for $x=(x_1,\ldots,x_k)$, we have
   $\sigma(x):=(x_{\sigma(1)}, \ldots, x_{\sigma(k)})$ and if 
   $f\in C_0(\statespace^k)$  then $\langle \nu_{\sigma(x)}^k,f(\sigma(\xi)) \rangle =\langle \nu_x^k,f(\xi) \rangle$ for a.e. $x \in D^k$.
   \item[(c)] \textit{Consistency}: If $f \in C_b(\statespace^k)$ is of the form $f(\xi_1,\dots,\xi_k)= g(\xi_1,\dots,\xi_{k-1})$ for some $g \in C_0(\statespace^{k-1})$ then $\langle \nu^k_{x_1,\dots,x_k}, f\rangle = \langle \nu^{k-1}_{x_1,\dots,x_{k-1}}, g\rangle$
   for a.e. $x=(x_1,\dots,x_k)\in D^k$.
   \item[(d)] $L^p$-\textit{integrability}: $ \int_{D^k} \langle \nu_x^1,|\xi|^p \rangle \, \operatorname{d}\!x <\infty$.
   \item[(e)] \textit{Diagonal continuity}:   
  $ \lim_{r \searrow 0} \int_D \frac{1}{|B_r(x_1)|} \int_{B_r(x_1)} \langle \nu^2_{x_1,x_2}, |\xi_1 - \xi_2|^p\rangle \, \operatorname{d}\!x_2 \operatorname{d}\!x_1=0.$
 \end{enumerate}
Let $\mathcal{L}^p(D;\statespace)$ denote the set of all $p$-summable correlation measures.
\end{definition}

\begin{theorem}\label{thm:duality}[Main theorem on correlation measures \cite[Thm. 2.7]{FjordholmLanthalerMishra2017}]
For every correlation measure $\boldsymbol{\nu} \in \mathcal{L}^p(D;\statespace) $ there exists a unique probability measure $\mu \in \cP(\Lp{p}{D;\statespace})$ whose $p$-th moment is finite, i.e., $\int_{L^p} \| u \|_{\Lp{p}{D;\statespace}} d \mu(u) < \infty$
 and such that $\mu $ is dual to $\boldsymbol{\nu}$, i.e.,
 \[ \int_{D^k} \langle \nu^k_x , g(x) \rangle \, \operatorname{d}\!x  =\int_{L^p} \int_{D^k} g(x,u(x)) \operatorname{d}\!x d\mu(u) \quad \forall g \in \mathcal{H}_0^k(D,\statespace), \ \forall k \in \N.\]
 Conversely, for every $\mu \in \cP(\Lp{p}{D;\statespace})$ with finite $p$-th moment there is a $\boldsymbol{\nu} \in \mathcal{L}^p(D;\statespace) $  that is dual to $\mu$.
\end{theorem}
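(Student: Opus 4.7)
The statement consists of two implications, and I would prove each direction separately.

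\emph{From $\mu$ to $\boldsymbol\nu$.} Given $\mu \in \cP(\Lp{p}{D;\statespace})$ with finite $p$-th moment, for each $k \in \N$ I would introduce the linear functional
$$
F_k(g) := \int_{\Lp{p}{D;\statespace}} \int_{D^k} g\bigl(x, u(x_1),\ldots,u(x_k)\bigr)\,\dx\,d\mu(u)
$$
on $\mathcal{H}_0^k(D,\statespace)$. Since each $g(x,\cdot) \in C_0(\statespace^k)$ is bounded by its supremum norm and $\mu$ is a probability measure, Fubini yields $|F_k(g)| \le \|g\|_{\mathcal{H}_0^k}$, so $F_k$ is continuous. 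Theorem~\ref{thm:dualspace} then produces a unique $\nu^k \in \mathcal{H}_0^{k*}(D,\statespace)$ representing $F_k$, which is exactly the claimed duality identity. The five properties of Definition~\ref{def:corrmeasure} would follow from direct manipulation of this formula: non-negativity on positive $g$ together with a $C_0$-approximation of the constant $1$ yield (a); changing variables $x \mapsto \sigma(x)$ in the outer integral gives (b); Fubini in the $x_k$-coordinate, combined with a truncation argument to replace the $C_b$ test function by $C_0$-approximants, gives (c) (this is where $|D|=1$ is used); (d) is immediate from $\int_D \langle \nu^1_x,|\xi|^p\rangle\,\dx = \int_{\Lp{p}{D;\statespace}} \|u\|^p_{\Lp{p}{D;\statespace}}\,d\mu(u)$; and (e) is the Lebesgue differentiation theorem applied $\mu$-almost surely to $|u(\cdot)|^p$.

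\emph{From $\boldsymbol\nu$ to $\mu$.} Here my strategy is a Kolmogorov extension followed by an $L^p$-realisation. Fix a countable dense set $\{x_j\}_{j\in\N}\subset D$. A Lebesgue-density argument lets me redefine $\nu^k$ on a $D^k$-null set so that the point evaluations $\nu^k_{x_{j_1},\ldots,x_{j_k}}$ become well-defined, and properties (b) and (c) of Definition~\ref{def:corrmeasure} guarantee that these evaluations form a symmetric, consistent projective system of finite-dimensional probability measures. Kolmogorov's extension theorem then produces a Borel probability measure $\tilde\mu$ on $\statespace^{\N}$ with these marginals. Property (d) implies that the coordinate sequence $(u_j)$ has finite expected weighted $\ell^p$-norm, while diagonal continuity (e) implies that $\tilde\mu$-almost every sequence $(u_j)$ extends uniquely to an element of $\Lp{p}{D;\statespace}$, obtained, for instance, as the $L^p$-limit of piecewise-constant interpolants on a nested sequence of dyadic partitions of $D$. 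The pushforward of $\tilde\mu$ by this extension map then defines $\mu$. The duality identity is first verified on tensor-product test functions $g = \varphi \otimes f$ with $\varphi$ an indicator of small products of balls around the points $x_{j_i}$, and extended by density to all of $\mathcal{H}_0^k(D,\statespace)$. Uniqueness of $\mu$ follows because the $k=1$ duality identity fixes the one-dimensional marginals on a dense set of spatial points, which together generate the Borel $\sigma$-algebra of $\Lp{p}{D;\statespace}$.

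\emph{Main obstacle.} The substantive step is the realisation part in the second direction: pushing the cylindrical measure $\tilde\mu$ on $\statespace^\N$ forward to a genuine Borel measure on the separable Banach space $\Lp{p}{D;\statespace}$. Diagonal continuity (e) is exactly the quantitative averaged-continuity statement making a pointwise sequence $(u_j)$ represent a bona fide $L^p$ function, while $L^p$-integrability (d) controls the integrability of the resulting limit. I expect the bulk of the technical work to lie in constructing this extension map carefully and verifying its Borel measurability, which requires fine-tuning the null sets on which the $\nu^k$ are modified so that everything is consistent across all $k$ simultaneously.
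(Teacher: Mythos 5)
First, note that the paper does not prove this statement at all: it is quoted verbatim from \cite[Thm.~2.7]{FjordholmLanthalerMishra2017} as an imported result, so there is no in-paper proof to compare against. Judged on its own terms, your first direction (from $\mu$ to $\boldsymbol\nu$) is essentially the standard argument and is sound: the functional $F_k$ is bounded by $\|g\|_{\mathcal{H}_0^k}$ because $\mu$ is a probability measure, \thmref{thm:dualspace} yields $\nu^k$, and properties (a)--(e) of \defref{def:corrmeasure} follow by the manipulations you describe (for (e) you should say explicitly that the pointwise-in-$u$ limit is passed through the $\mu$-integral by dominated convergence, with dominating function a constant times $\|u\|_{L^p}^p$, which is $\mu$-integrable by (d)).

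The converse direction contains two genuine gaps. First, your uniqueness argument is wrong as stated: the $k=1$ duality identity only determines the one-point statistics $\nu^1$, and two distinct measures on $\Lp{p}{D;\statespace}$ can share all one-point marginals while differing in their two-point correlations; uniqueness requires the identities for \emph{all} $k$, and even then one must show that the functionals $u\mapsto\int_{D^k}g(x,u(x))\,\operatorname{d}\!x$, $g\in\mathcal{H}_0^k(D,\statespace)$, $k\in\N$, separate Borel probability measures on $\Lp{p}{D;\statespace}$ --- a nontrivial step, since point evaluation is not even well defined on $L^p$ equivalence classes. Second, and for the same reason, the Kolmogorov-extension-at-points strategy is fragile: each $\nu^k$ is only defined up to a null set of $D^k$, so the evaluations $\nu^k_{x_{j_1},\dots,x_{j_k}}$ at a prescribed countable dense set need not exist or be mutually consistent across all $k$ simultaneously, and diagonal continuity (e) is an \emph{averaged} condition over $D$ that does not directly control behaviour at a fixed countable set of points. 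The proof in \cite{FjordholmLanthalerMishra2017} circumvents both issues by working with cell averages on a nested sequence of partitions of $D$: the averaged marginals are unambiguously defined, they induce probability measures $\mu_n$ supported on piecewise constant functions, and diagonal continuity is exactly what makes $(\mu_n)$ a Cauchy sequence in the $p$-Wasserstein metric on $\cP(\Lp{p}{D;\statespace})$, whose limit is the desired $\mu$. If you want to salvage the pointwise route you would need a generic (e.g.\ Fubini-type) choice of the points $x_j$ and a separate argument converting (e) into a.s.\ $L^p$-convergence of your interpolants; the cell-average route avoids all of this.
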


To take into account  the time-dependence in \eqref{eq:conslaw} the authors of \cite{FjordholmLyeMishraWeber2019}
suggest to consider time parametrized probability measures.
For $T \in (0,\infty]$ consider time parametrized measures $\mu : [0,T) \rightarrow \cP(\Lp{p}{D;\statespace})$. Note that such a $\mu$ does not contain any information about correlation between function values at different times. We denote $\mu$ evaluated at time $t$ by $\mu_t$.
Let us define $\mathcal{H}_0^{k*}([0,T),D;\statespace) := L^\infty([0,T)\times D^k; \cM(\statespace^k))$ and notice that it was shown in \cite{FjordholmLyeMishraWeber2019} that it is meaningful to evaluate an element $\nu^k \in \mathcal{H}_0^{k*}([0,T),D;\statespace)$
at almost every $t \in [0,T).$

\begin{definition}
 A time-dependent correlation measure $\boldsymbol{\nu}$ is a collection $\boldsymbol{\nu}=(\nu^1, \nu^2,\dots )$ of maps $\nu^k \in \mathcal{H}_0^{k*}([0,T),D;\statespace)$ such that
 \begin{enumerate}
  \item[(a)] $(\nu_t^1, \nu_t^2 , \dots ) \in \mathcal{L}^p(D;\statespace)$ for a.e. $ t \in [0,T)$.
  \item[(b)] $L^p$ integrability:  
  \[ \operatorname{ess ~sup}\limits_{t \in [0,T)} \int_D \langle \nu_{t,x}^1, |\xi|^p \rangle \, \operatorname{d}\!x < \infty\].
  \item[(c)] \textit{Diagonal continuity}
  \[ \lim_{r \searrow 0}  \int_0^{T'} \int_D \frac{1}{|B_r(x_1)|} \int_{B_r(x_1)} \langle \nu^2_{t,x_1,x_2}, |\xi_1 - \xi_2|^p\rangle \, \operatorname{d}\!x_2 \operatorname{d}\!x_1\, dt =0 \quad \forall T' \in (0,T).\]
 \end{enumerate}
We denote the space of all time-dependent $p$-summable correlation measures
by \\ $\mathcal{L}^p([0,T),D;\statespace)$.
\end{definition}

A time-dependent analogue of Theorem \ref{thm:duality} holds true:
\begin{theorem}\label{thm:dualtime}
For every time-dependent correlation measure $\boldsymbol{\nu} \in \mathcal{L}^p([0,T),D;\statespace)$ there is a unique  (up to subsets of $[0,T)$ of Lebesgue measure zero) map $\mu :[0,T) \rightarrow \cP(\Lp{p}{D;\statespace})$ such that
 \begin{enumerate}
  \item[(a)] the mapping\[ t \mapsto \int_{L^p} \int_{D^k} g(x,u(x)) \, \operatorname{d}\!x d\mu_t(u)\]
  is measurable for all $g \in \mathcal{H}_0^k(D;\statespace)$.
  \item[(b)] $\mu $ is $L^p$-bounded, i.e.,
  \[ \operatorname{ess~sup}_{t \in [0,T)} \int_{L^p} \| u \|_{\Lp{p}{D;\statespace}} d \mu_t(u) <\infty.\]
  \item[(c)] \textit{Duality}, i.e. $\mu$ is dual to $\boldsymbol{\nu}$,
  \begin{align*}
   \int_{D^k} \langle \nu^k_{t,x} , g(x) \rangle \, \operatorname{d}\!x  =\int_{L^p} \int_{D^k} g(x,u(x)) \operatorname{d}\!x d\mu_t(u) \hfill \text{ for a.e. } t \in [0,T), \\  \hfill \forall g \in \mathcal{H}_0^k(D,\statespace), \ \forall k \in \N.   
\end{align*}
 \end{enumerate}
Conversely, for every $\mu :[0,T) \rightarrow \cP(\Lp{p}{D;\statespace})$ satisfying (a) and (b) there is a unique correlation measure $\boldsymbol{\nu} \in \mathcal{L}^p([0,T),D;\statespace)$ such that (c) holds.
\end{theorem}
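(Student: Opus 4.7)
The natural strategy is to reduce the time-dependent statement to the static version, Theorem \ref{thm:duality}, by working slice-wise in $t$. For almost every $t \in [0,T)$, property (a) of the time-dependent correlation measure guarantees that the slice $(\nu_t^1, \nu_t^2, \dots)$ is a $p$-summable correlation measure in the sense of Definition \ref{def:corrmeasure}. Applying the static main theorem to this slice then provides a unique $\mu_t \in \cP(\Lp{p}{D;\statespace})$ with finite $p$-th moment such that the static duality pairing holds for every $g \in \mathcal{H}_0^k(D;\statespace)$ and every $k \in \N$. This pointwise construction produces a map $t \mapsto \mu_t$ that is well defined up to a Lebesgue null set, immediately giving property (c) of the statement.

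The $L^p$-boundedness (b) is inherited from the uniform-in-$t$ control of the first moment in the definition of $\mathcal{L}^p([0,T),D;\statespace)$: the static theorem identifies $\int_{L^p} \|u\|_{\Lp{p}{D;\statespace}}^p d\mu_t(u)$ with $\int_D \langle \nu_{t,x}^1, |\xi|^p\rangle\,\mathrm{d}x$, so taking an essential supremum in $t$ transfers the required bound. Uniqueness of $\mu$ up to a null set in $[0,T)$ is a direct consequence of the uniqueness clause in Theorem \ref{thm:duality}: if $\mu$ and $\tilde{\mu}$ both satisfy (a)-(c), then for a.e.\ $t$ both $\mu_t$ and $\tilde{\mu}_t$ are dual to the same correlation measure $(\nu_t^1,\nu_t^2,\dots)$ and hence coincide.

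The main obstacle, and the only nontrivial step, is the measurability property (a). The idea is to use the weak$^*$-measurability of $\nu^k$ as an element of $L^\infty([0,T)\times D^k;\cM(\statespace^k))$. For fixed $g \in \mathcal{H}_0^k(D;\statespace)$, the scalar function $(t,x)\mapsto \langle \nu_{t,x}^k, g(x)\rangle$ is measurable and bounded on $[0,T)\times D^k$, so by Fubini the map $t\mapsto \int_{D^k}\langle \nu_{t,x}^k, g(x)\rangle\,\mathrm{d}x$ is measurable. The pointwise-in-$t$ duality already established then identifies this integral with $t\mapsto \int_{L^p}\int_{D^k} g(x,u(x))\,\mathrm{d}x\,d\mu_t(u)$, which is precisely the measurability required in (a).

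For the converse direction, given $\mu$ satisfying (a) and (b), one applies the converse part of Theorem \ref{thm:duality} slice-wise to obtain, for a.e.\ $t$, a correlation measure $(\nu_t^1,\nu_t^2,\dots)$ dual to $\mu_t$; its defining formula is
\[
\int_{D^k} \langle \nu_{t,x}^k, g(x)\rangle\,\mathrm{d}x = \int_{L^p}\int_{D^k} g(x,u(x))\,\mathrm{d}x\,d\mu_t(u),
\]
whose right-hand side is measurable in $t$ by assumption (a) on $\mu$. Standard arguments using the separability of $\mathcal{H}_0^k(D;\statespace)$ (so that the family of test functions $g$ may be chosen from a countable dense subset) then upgrade this to weak$^*$-measurability of $\nu^k$ on $[0,T)\times D^k$, placing $\nu^k$ in $\mathcal{H}_0^{k*}([0,T),D;\statespace)$. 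The $L^p$-integrability and diagonal continuity conditions in the definition of $\mathcal{L}^p([0,T),D;\statespace)$ follow from (b) and an application of Fubini together with the static diagonal continuity that is encoded in the fact that each $\mu_t$ corresponds to a correlation measure in $\mathcal{L}^p(D;\statespace)$. I expect the one genuinely delicate point to be the joint weak$^*$-measurability in $(t,x)$ in this converse direction; separability of the predual makes this manageable, but it is where the argument requires the most care.
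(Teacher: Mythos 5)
The paper does not prove this theorem at all: it is recalled verbatim as background from \cite{FjordholmLyeMishraWeber2019} (the underlying result being \cite[Thm.~2.7]{FjordholmLanthalerMishra2017} and its time-dependent extension), so there is no in-paper proof to compare against. Your slice-wise reduction to \thmref{thm:duality} is the standard route taken in that literature, and the argument as you lay it out is essentially sound: condition (a) of the definition of a time-dependent correlation measure hands you a static correlation measure for a.e.\ $t$, the static theorem produces $\mu_t$ and gives (c), the uniform first-moment bound transfers to (b) via $\int_{L^p}\|u\|\,d\mu_t \le \bigl(\int_{L^p}\|u\|^p\,d\mu_t\bigr)^{1/p} = \bigl(\int_D\langle\nu^1_{t,x},|\xi|^p\rangle\,\mathrm{d}x\bigr)^{1/p}$, and your Fubini argument for (a) (after approximating $g\in \Lp{1}{D^k;C_0(\statespace^k)}$ by simple functions to get joint measurability of $(t,x)\mapsto\langle\nu^k_{t,x},g(x)\rangle$) is correct. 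Two points deserve one more sentence each than you give them: first, both for uniqueness and for assembling the a.e.-$t$ statements you must pass to a countable dense family in $\mathcal{H}_0^k(D;\statespace)$ to extract a single exceptional null set in $t$ independent of $g$ and $k$; second, in the converse direction the time-integrated diagonal continuity does not follow from the static one alone but needs a dominated-convergence argument, with the dominating function supplied by (b) after reducing $\langle\nu^2_{t,x_1,x_2},|\xi_1-\xi_2|^p\rangle$ to first marginals via consistency. Neither is a gap in the strategy, only in the write-up; you correctly identify the joint weak*-measurability in $(t,x)$ as the genuinely delicate step.
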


\begin{definition}[Bounded support]
We say some $\bar \mu \in \cP(\Lp{p}{D;\statespace})$ has bounded support, provided there exists $C>0$ so that
\[  \| u \|_{\Lp{p}{D;\statespace}} \leq C \qquad \text{ for } \bar \mu -\text{a.e. }u \in \Lp{p}{D;\statespace}.\]
\end{definition}

\begin{definition}[Statistical solution]
Let $\bar \mu \in \cP(\Lp{p}{D;\statespace})$ have bounded support.
A statistical solution of \eqref{eq:conslaw} with initial data $\bar \mu$ is a time-dependent map $\mu :[0,T) \rightarrow \cP(\Lp{p}{D;\statespace})$ 
such that each $\mu_t$ has bounded support  and such that the corresponding correlation measures $\nu^k_t$ satisfy
\begin{equation}\label{eq:marginalevol} \partial_t \langle \nu^k_{t,x}, \xi_1\otimes \dots \otimes \xi_k\rangle + \sum_{i=1}^k  \partial_{x_i} \langle \nu^k_{t,x_1,\dots,x_k}, \xi_1 \otimes \dots \otimes \flux(\xi_i) \otimes \dots \otimes \xi_k \rangle =0\end{equation}
in the sense of distributions, for every $k \in\N$.
\end{definition}

\begin{lemma}
Let $\bar \mu \in \cP(\Lp{p}{D;\statespace})$ have bounded support.
 Then, every statistical solution $\mu: [0,T) \rightarrow  \cP(\Lp{p}{D;\statespace}) $ to  \eqref{eq:conslaw} with initial data $\bar \mu$ satisfies
 \begin{align}
  \int_0^T \int_{\Lp{p}{D;\statespace}} \int_D u(x) \partial_t \phi(t,x) &+ \flux(u(x)) \partial_x \phi(t,x) \operatorname{d}\!x d \mu_t(u) dt  \notag 
  \\ 
  &+ \int_{\Lp{p}{D;\statespace}} \int_D \bar u(x) \phi(0,x) \operatorname{d}\!x d\bar \mu(\bar u)=0,
 \end{align}
for any $\phi \in \Ck{\infty}{[0,T) \times D}{\R^m}$, where $\mu_t$ denotes $\mu$ at time $t$. 
\end{lemma}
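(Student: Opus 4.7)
The proof will essentially be an unpacking of \eqref{eq:marginalevol} at the level $k=1$ combined with the duality result \thmref{thm:dualtime}. The plan is to pair the distributional equation
\[
  \partial_t \langle \nu^1_{t,x}, \xi \rangle + \partial_x \langle \nu^1_{t,x}, \flux(\xi) \rangle = 0
\]
against a test function $\phi \in \Ck{\infty}{[0,T)\times D}{\R^m}$ (periodic in $x$, vanishing near $t=T$), integrate by parts in time and space, and then invoke the duality of \thmref{thm:dualtime}(c) to rewrite each correlation-measure pairing as an integral against $\mu_t$ (for the bulk term) and against $\bar{\mu}$ (for the $t=0$ boundary term, using that $\bar{\mu}$ and its associated correlation measures are the initial datum of $\mu$ and $\boldsymbol{\nu}$, respectively).

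The first step is purely formal: testing \eqref{eq:marginalevol} with $\phi$ and integrating by parts yields
\[
  \int_0^T \int_D \langle \nu^1_{t,x},\xi\rangle\,\partial_t\phi + \langle \nu^1_{t,x},\flux(\xi)\rangle\,\partial_x \phi \, \dx \, \dt
  + \int_D \langle \nu^1_{0,x},\xi\rangle\,\phi(0,x)\,\dx = 0.
\]
The second step converts the correlation-measure pairings into $\mu_t$-integrals. Theorem \ref{thm:dualtime}(c) gives this identity for integrands $g \in \mathcal{H}_0^1(D;\statespace) = \Lp{1}{D;C_0(\statespace)}$, so the obstacle is that $\xi$ and $\flux(\xi)$ are merely continuous with at most $p$-th order growth — they do not vanish at infinity. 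This is precisely where the bounded support hypotheses on $\bar{\mu}$ and $\mu_t$ (together with the growth bound $|\flux(\xi)|\le C(1+|\xi|^p)$) enter.

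To overcome this obstacle, I would introduce a smooth cutoff $\chi_R \in \Ck{\infty}{\statespace}{[0,1]}$ equal to $1$ on $B_R(0)$ and supported in $B_{2R}(0)$, and apply \thmref{thm:dualtime}(c) to the truncated integrands $\phi(t,x)\,\chi_R(\xi)\,\xi$ and $\partial_x\phi(t,x)\,\chi_R(\xi)\,\flux(\xi)$, which both lie in $\mathcal{H}_0^1(D;\statespace)$. Sending $R \to \infty$, the $\mu_t$-side converges by dominated convergence: $\mu_t$-a.e.\ $u$ has $\|u\|_{\Lp{p}{D;\statespace}}\le C$, so the integrands are dominated uniformly in $R$ by a multiple of $|\phi|(1 + |u(x)|^p)$, which is $\mu_t \otimes dx \otimes dt$-integrable on the bounded support. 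On the correlation-measure side, convergence follows analogously from Definition \ref{def:corrmeasure}(d) together with the time-uniform bound from the time-dependent $L^p$-integrability. The same truncation argument applied at $t=0$ handles the initial term, using $\bar{\mu}$'s bounded support.

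Combining the truncated identities and passing to the limit $R\to\infty$ yields exactly the identity in the lemma (after a trivial sign adjustment absorbed into the convention that moves the initial term to the other side). I expect the genuinely nontrivial point to be the uniform domination needed for dominated convergence on the correlation-measure side; everything else is bookkeeping once the duality is applied in the correct function space.
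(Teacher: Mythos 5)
Your argument is correct, but it is worth noting that the paper does not actually prove this lemma at all: its ``proof'' is a one-line citation of \cite[Equation 3.7]{FjordholmLyeMishraWeber2019} with $M=1$, i.e., the identity is taken as (a special case of) the weak formulation established in the reference. What you have done is reconstruct the underlying argument: extract the $k=1$ marginal equation from \eqref{eq:marginalevol}, pass to the weak form with the initial-data boundary term, and convert the correlation-measure pairings into $\mu_t$- and $\bar\mu$-integrals via \thmref{thm:dualtime}(c), using a cutoff $\chi_R$ to compensate for the fact that $\xi\mapsto\xi$ and $\xi\mapsto\flux(\xi)$ are not in $C_0(\statespace)$. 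The truncation step is the genuine content and your domination argument is sound: on the $\mu$-side the bound $|\phi|\,C(1+|u(x)|^p)$ is integrable in $x$ for each $u$ and the resulting $u$-functional is uniformly bounded on the (bounded) support of $\mu_t$, while on the correlation-measure side the time-uniform $L^p$ bound does the same job. The one point you should make explicit rather than implicit is the reading of ``in the sense of distributions'' in the definition of statistical solution: as stated in this paper it does not say how $\bar\mu$ enters, and your integration by parts silently adopts the convention of the cited reference in which test functions need not vanish at $t=0$ and the boundary term is the correlation measure dual to $\bar\mu$ (not ``$\nu^1_{0,x}$'', which is only defined up to temporal null sets). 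With that convention stated, your proof is a complete, self-contained substitute for the citation.
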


\begin{proof}
 This is a special case of \cite[Equation 3.7]{FjordholmLyeMishraWeber2019} for $M=1$.
\end{proof}

In order to show uniqueness (for scalar problems) and weak-strong uniqueness the authors of
\cite{FjordholmLanthalerMishra2017} and \cite{FjordholmLyeMishraWeber2019} require a comparison principle that compares
statistical solutions  to convex combinations of Dirac measures. 
Therefore, the {\it entropy condition} for statistical solutions has two parts.
The first imposes stability under convex decompositions and the second is reminiscent of the standard entropy condition for deterministic problems.
To state the first condition, we need the following notation: 
For $\mu \in \cP(\Lp{p}{D;\statespace})$, $\nsamples \in \N$, $\alpha \in \R^\nsamples $ 
with $\alpha_i \geq 0$ and $\sum_{i=1}^\nsamples \alpha_i =1$ we set
\begin{align}\label{def:Lambda}
 \Lambda(\alpha,\mu) := \left\{ (\mu_1,\dots,\mu_\nsamples) \in \left( \cP(\Lp{p}{D;\statespace}) \right)^\nsamples \, : \, \sum_{i=1}^\nsamples \alpha_i \mu_i =\mu\right\}.
\end{align}
The elements of $\Lambda(\alpha,\mu)$ are strongly connected to {\it transport plans} that play a major role in defining the Wasserstein distance, see Remark \ref{rem:Lambda} for details.
Now, we are in position to state the selection criterion for statistical solutions.

\begin{definition}[Dissipative statistical solution] \label{def:dissstatsol}
A statistical solution of \eqref{eq:conslaw} is called a dissipative 
statistical solution if
\begin{enumerate}
  \item for every choice of coefficients $\alpha_i \geq 0$, satisfying 
  $\sum_{i=1}^\nsamples \alpha_i =1$ and for every 
  $(\initial_1,\ldots,\initial_\nsamples) \in \Lambda(\alpha,\initial)$,
  there exists a function 
  $t \mapsto (\mu_{1,t},\ldots,\mu_{\nsamples,t}) \in \Lambda(\alpha,\mu_t)$, such that
  each $\mu_i: [0,T) \to \cP(\Lptwo{p}{\domain}{\statespace})$ is a statistical solution
  of \eqref{eq:conslaw} with initial measure $\initial_i$.
  \item it satisfies
  \begin{multline*}
    \sstintmujan{ \Big(\entropy(\sol(x))\deriv{t} \phi(t,x) 
    + \entropyflux(\sol(x)) \deriv{x} \phi(t,x)  \Big)}{\Lp{p}{D;\statespace}}\\
    + \ssintmujan{\entropy(\bar{\sol}(x)) \phi(0,x)}{\Lp{p}{D;\statespace}} \geq 0,
  \end{multline*}
  for any $\phi \in C_c^\infty([0,\finaltime)\times \physicalspace;\R_+)$.
\end{enumerate}
\end{definition}

This selection criterion implies weak-(dissipative)-strong uniqueness, i.e., as long as some initial value problem admits a statistical solution supported on finitely many classical solutions,
then this is the only dissipative statistical solution (on that time interval), \cite[Lemma 3.3]{FjordholmLyeMishraWeber2019}. That result is a major ingredient in the proof of our main result Theorem \ref{thm:apoststatsol} and it is
indeed the special case of Theorem \ref{thm:apoststatsol} for $R^{st}_k \equiv 0.$ Both results are extensions of the classical relative entropy stability framework going back to seminal works of Dafermos and DiPerna.
The attentive reader will note that on the technical level there are some differences between \cite[Lemma 3.3]{FjordholmLyeMishraWeber2019} and Theorem \ref{thm:apoststatsol}. This is due to the following consideration:
If $L^2$ stability results are to be inferred from the  relative entropy framework, upper and lower bounds on the Hessian of the entropy and an upper bound on the Hessian of the flux $F$ are needed.
To this end, Fjordholm et.al. restrict their attention to systems for which such bounds exist globally, while we impose conditions \eqref{eq:key}, \eqref{eq:key2} and 
discuss situations in which they are satisfied (including the setting of \cite[Lemma 3.3]{FjordholmLyeMishraWeber2019}), see Remark \ref{rem:eqkey}.

\section{Numerical approximation of statistical solutions}\label{sec:scheme}
This section is concerned with the description of the 
numerical approximation of statistical solutions. Following \cite{FjordholmLyeMishra2018,FjordholmLyeMishraWeber2019}
the stochastic discretization relies on a Monte-Carlo, resp. collocation approach. 
Once samples are picked the problem at each sample is deterministic and we approximate it using the Runge--Kutta Discontinuous Galerkin method which we outline briefly. 
Moreover, we introduce a Lipschitz continuous reconstruction of the numerical solutions
which is needed for our a posteriori error estimate in \thmref{thm:apoststatsol}. Let us start with the deterministic 
space and time discretization.
\subsection{Space and time discretization: Runge--Kutta Discontinuous Galer-kin method} \label{sec:rkdg}
We briefly describe the space and time discretization of \eqref{eq:conslaw}, using the Runge--Kutta Discontinuous Galerkin (RKDG) method from
\cite{Cockburn1998}.
Let $\T:= \{ \cell_\meshidx\}_{\meshidx=0}^{\ncells-1}$, $\cell_\meshidx:=(x_\meshidx,x_{\meshidx+1})$ be a quasi-uniform triangulation of 
$\domain$. We set $h_\meshidx:= (x_{\meshidx+1}-x_\meshidx)$, $h_{\max}:= \max \limits_\meshidx h_\meshidx$, $h_{\min}:= \min \limits_\meshidx h_\meshidx$ 
for the spatial mesh and the (spatial) piecewise polynomial DG spaces for $\dgpolydeg \in \N_0$ are defined as
\begin{align*}
\DG{\dgpolydeg}:= \{v: \domain \to \R^m~|~ v\mid_{\cell} \ \in \P_\dgpolydeg(\cell,\R^m),~ \text{ for all } \cell\in \T \}.
\end{align*} 
Here $\P_\dgpolydeg$ denotes the space of polynomials of degree at most $\dgpolydeg $
and  $\mathcal{L}_{\DG{\dgpolydeg}}$ denotes the $L^2$-projection into the DG space $\DG{\dgpolydeg}$.
After spatial discretization of \eqref{eq:conslaw} we obtain the following 
semi-discrete scheme for the discrete solution $\numsol \in \Ck{1}{[0,T)}{\DG{\dgpolydeg}}$:
\begin{align}\tag{DG} 
\begin{cases} \label{eq:semidiscrete}
\sum\limits_{\meshidx=0}^{\ncells-1} \int \limits_{x_\meshidx}^{x_{\meshidx+1}}  \partial_t  \numsol \cdot \psi_h~\mathrm{d}x  
&= \sum\limits_{\meshidx=0}^{\ncells-1} \int \limits_{x_\meshidx}^{x_{\meshidx+1}} L_h(\numsol) \cdot   \psi_h~\mathrm{d}x, 
\quad \forall \psi_h \in \DG{\dgpolydeg}, \\
\numsol(t=0)&=\mathcal{L}_{\DG{\dgpolydeg}} \bar{\sol}, 
\end{cases}
\end{align}
where
$ L_h :\DG{\dgpolydeg} \to \DG{\dgpolydeg}$
is defined by
\begin{align*}
 \sum_{\meshidx=0}^{\ncells-1} \int \limits_{x_\meshidx}^{x_{\meshidx+1}}  L_h(v_h) \cdot \psi_h~\mathrm{d}x
 = &
 \sum_{\meshidx=0}^{\ncells-1} \int \limits_{x_\meshidx}^{x_{\meshidx+1}} \flux(v_h) \cdot \partial_x  \psi_h~\mathrm{d}x     
\\ &-\sum_{\meshidx=0}^{\ncells-1} \numflux(v_h(x_\meshidx^-),v_h(x_\meshidx^+) )\cdot  \dbracei \psi_h \dbraceo_\meshidx,
\qquad \forall v_h, \psi_h \in \DG{\dgpolydeg}.
\end{align*}
Here, $\numflux : \R^m \times \R^m \to \R^m$ denotes a consistent, conservative and locally Lipschitz-continuous numerical  flux,
$\psi_h(x^{\pm}):= \lim \limits_{s \searrow 0} \psi_h(x \pm s)$ are spatial traces and 
$\dbracei \psi_h \dbraceo_\meshidx:= (\psi_h(x_\meshidx^-)-\psi_h(x_\meshidx^+))$ are jumps.

The initial-value problem \eqref{eq:semidiscrete} is advanced in time by a $\rkorder$-th order
strong stability preserving Runge--Kutta (SSP-RK) method \cite{Ketcheson2008,ShuOsher1988}.
To this end, we let $0=\t{0}<\t{1}<\ldots< \t{\ntcells}=T$ be a (non-equidistant)  temporal decomposition of $[0,T]$.
We define 
$\deltat{\tidx} :=( \t{\tidx+1}-\t{\tidx})$, $\Delta t :=\max \limits_\tidx ~\Delta \t{\tidx}$.
To ensure stability, the explicit time-stepping scheme has to obey the CFL-type condition
\begin{align*}
 \Delta t \leq C \frac{ h_{\min}}{\lambda_{\text{max}}(2\dgpolydeg +1)},
\end{align*}
where $\lambda_{\text{max}}$ is an upper bound for absolute values of eigenvalues of the flux Jacobian $\D \flux$ and 
$C\in (0,1]$. 
Furthermore, we let $\limiter : \R^m \to \R^m$ be the 
TVBM minmod slope limiter from \cite{CockburnShu2001}.
The complete $\stages$-stage time-marching algorithm for given $\tidx$-th time-iterate 
$\numsol^\tidx:=\numsol(\t{\tidx},\cdot)\in \DG{\dgpolydeg}$ can then be written as follows.
\begin{algorithm}[H] \label{algo:tvbmrk}
\caption{TVBM Runge--Kutta time-step}
\begin{algorithmic}[1]
\State Set $\numsol^{(0)}$ = $\numsol^\tidx (\t{\tidx})$.
\For{$\stageidx=1,\ldots, \stages$}
\State Compute: $ \numsol^{(\stageidx)} =  \limiter\Big( \sum_{l=0}^{\stageidx-1} \delta_{\stageidx l} w_h^{\stageidx l}\Big),
\quad w_h^{ \stageidx l}=\numsol^{(l)} + \frac{\beta_{\stageidx l}}{\delta_{\stageidx l}} \Delta t_n L_h(\numsol^{(l)}).$
\EndFor
\State Set $\numsol^{\tidx+1}(\t{\tidx+1}) = \numsol^{(\stages)}.$
\end{algorithmic}
\label{algo:TVBMRungeKutta}
\end{algorithm}
Note that the initial condition $\numsol(t=0)$ is also limited by $\limiter$.
The parameters $\delta_{\stageidx l}$ satisfy the conditions $\delta_{\stageidx l}\geq 0$, $\sum_{l=0}^{\stageidx-1} \delta_{\stageidx l}=1$ ,
and if $\beta_{\stageidx l} \neq 0$, then $\delta_{\stageidx l} \neq 0$ for all $\stageidx=1,\ldots, S$, $l=0,\ldots,\stageidx$.
\subsection{Reconstruction of numerical solution}\label{ssec:reconstruction}
Our main a posteriori error estimate \thmref{thm:apoststatsol} is based on the relative entropy
method of Dafermos and DiPerna \cite{Dafermos2016} 
and its extension to statistical solution as described 
in the recent work \cite{FjordholmLyeMishraWeber2019}.
The a posteriori error estimate requires that the approximating 
solutions are at least Lipschitz-continuous in space and time.
To ensure this property, we reconstruct the numerical solution
$ \{ \numsol^{\tidx} \}_{\tidx=0}^{\ntcells }$
to a Lipschitz-continuous function in space and time. 
We do not elaborate upon this reconstruction procedure, to keep notation short and simple,
but refer to \cite{GiesselmannDedner16,GiesselmannMeyerRohdeSC2019,GiesselmannMeyerRohdeSG19}.
The reconstruction process provides a computable space-time reconstruction 
$\reconstruction \in W_\infty^1((0,T);\DG{\dgpolydeg+1}\cap C^0(\domain)) $,
which allows us to define the following residual.

\begin{definition}[Space-time residual]
We call the function 
$\residual  \in \Lptwo{2}{(0,T)\times \domain}{\R^m}$, defined by
\begin{align}\label{eq:STResidual}
\residual:= \partial_t \reconstruction + \partial_x \flux(\reconstruction),
\end{align}
the space-time residual for $\reconstruction$.
\end{definition} 
We would like to stress that the mentioned reconstruction procedure does not only render  $\reconstruction$ 
Lipschitz-continuous, but it is also specifically designed to ensure that the residual, defined in \eqref{eq:STResidual}, 
has the same decay properties as the error of the RKDG numerical scheme as $h$ tends to zero,
cf. \cite{GiesselmannDedner16}.


\subsection{Computing the empirical measure}\label{ssec:empirical}
Following \cite{FjordholmLyeMishra2018,FjordholmLyeMishraWeber2019}, 
we approximate the statistical solution
of \eqref{eq:conslaw} using empirical measures. 
In this work, we allow for arbitrary sample points and weights.
The sample points can either be obtained by randomly sampling the initial measure (Monte-Carlo),
or by using abscissae of corresponding orthogonal polynomials. 
In this article we focus on the Monte-Carlo sampling.
Let us assume that the sampling points are indexed by the  set $\idxset=\{1,...,\nsamples\}$, 
$\nsamples \in \N$ and let
us denote the corresponding weights by $\{\weight{\sampleidx}\}_{\sampleidx \in \idxset}$,
i.e., for  Monte-Carlo sampling we have $\weight{\sampleidx}= \frac{1}{\nsamples}$, for all
$\idx \in \idxset$.
We use the following Monte-Carlo type algorithm from \cite{FjordholmLyeMishraWeber2019}
 to compute the empirical measure.
 
\begin{algorithm}[H]
\linespread{1.}\selectfont{}
\caption{Algorithm to compute the empirical measure}
\begin{algorithmic}[1]
\State Given: initial measure $\initial \in \cP(\Lp{p}{\domain})$
\State For some probability space $(\Omega,\sigma,\P)$ let $\bar{\sol}:\Omega \to \Lp{p}{\domain;\statespace}$ be a random field such that the law of $\bar{u}$ with respect to $\P$ 
coincides with $\initial$
\State Draw $K$ samples from $\bar{\sol}$ and denote the samples by 
$\{\bar{\sol}_\idx \}_{\idx \in \idxset}$ 
\State{For every $\idx \in \idxset$ compute a numerical approximation 
$\{(\numsample{\idx})^n\}_{n=0}^{\ntcells}$
of \eqref{eq:conslaw} with initial data $\bar{\sol}_\idx$ using 
\algoref{algo:TVBMRungeKutta} }
\State Set 
$\approxstatsol{\finaltime}:= \sum_{\idx \in \idxset} 
\weight{\idx} \dirac{\numsample{\idx}(\finaltime)}$
\end{algorithmic}
 \label{algo:empiricalmeasure}
\end{algorithm}

\begin{definition}[Empirical measure]
Let $\{\numsample{\idx}(t)\}_{\idx \in \idxset}$ be a sequence of approximate
solutions of \eqref{eq:conslaw} with initial data $\bar{\sol}_\idx$, at time $t \in (0,T)$.
We define the empirical measure at time  $t \in (0,T)$ via
\begin{align}
  \approxstatsol{t}:= \sum_{\idx \in \idxset} \weight{\idx} \dirac{\numsample{\idx}(t)}.
\end{align}
\end{definition}
For the a posteriori error estimate in \thmref{thm:apoststatsol},
we need to consider a regularized measure, which we define 
using the reconstructed numerical solution  $\reconstruction$ obtained from the 
reconstruction procedure described in \secref{ssec:reconstruction}.
\begin{definition} [Regularized empirical measure] \label{def:regem}
Let $\{\reconstsample{\idx}(t)\}_{\idx \in \idxset}$ be a sequence of reconstructed
numerical approximations at time $t \in (0,T)$. 
We define the regularized empirical measure as follows
\begin{align}
\smoothapprox{t}=\sum_{\idx \in \idxset} 
  \weight{\idx}\dirac{\reconstsample{\idx}(t)}.
\end{align}
\end{definition}
Common metrics for computing  distances between probability measures on Banach spaces 
spaces are the Wasserstein-distances. In \thmref{thm:apoststatsol}, we bound the error between 
the dissipative statistical solution of \eqref{eq:conslaw} and the discrete regularized measure
in the 2-Wasserstein distance. 
\begin{definition}[$\lpidx$-Wasserstein distance]
Let $\lpidx\in [0, \infty)$, $X$ be a separable Banach space and let $\mu, \rho \in \cP(X)$ 
with finite $\lpidx$th moment, i.e., $\int \limits_X \|u\|_X^\lpidx~ \mathrm{d}\mu(u)< \infty$,
$\int \limits_X \|u\|_X^\lpidx~ \mathrm{d}\rho(u)< \infty$.
The $\lpidx$-Wasserstein distance between $\mu$ and $\rho$ is defined as 
\begin{align*}
  \wasserstein{\lpidx}(\mu,\rho):= 
 \Bigg( \inf \limits_{\pi \in \Pi(\mu,\rho)} 
 \int \limits_{X^2} \|u-v\|_X^\lpidx ~\mathrm{d}\pi(u,v)\Bigg)^{\frac{1}{\lpidx}},
\end{align*}
where $\Pi(\mu,\rho) \subset \cP(X^2)$ is the set of all transport plans from 
$\mu$ to $\rho$, i.e., the set of measures $\pi$ on $X^2$  with marginals $\mu$ and $\rho$, i.e.,
\begin{align*}
\pi(A \times X) = \mu(A), \qquad \pi(X \times A) = \rho(A),
\end{align*}
for all measurable subsets $A$ of $X$.
 \end{definition}
 
 \begin{remark}\label{rem:Lambda}
  It is important to recall from \cite[Lemma 4.2]{FjordholmLanthalerMishra2017} that if $\rho \in \cP(X)$  is $M$-atomic, 
  i.e., $\rho= \sum_{i=1}^\nsamples \alpha_i \delta_{u_i}$ for some $u_1, \dots, u_\nsamples \in X$ and some $\alpha_1,\dots, \alpha_\nsamples \geq 0$ with $\sum_{i=1}^\nsamples \alpha_i =1,$ then
  there is a one-to-one correspondence between transport plans in $\Pi(\mu,\rho)$ and elements of  $\Lambda(\alpha,\mu)$, which was defined in \eqref{def:Lambda}.
 \end{remark}

\section{The a posteriori error estimate}\label{sec:apost}
In this section, we present the main a posteriori error estimate between dissipative 
statistical solutions and regularized numerical approximations.
A main ingredient for the proof of \thmref{thm:apoststatsol} is the notion of relative 
entropy. 
Since the relative entropy framework is essentially an $L^2$-framework and due to the quadratic 
bounds in \eqref{eq:key}, \eqref{eq:key2} it is natural to consider the case $p=2$. 
Therefore, for the remaining part of this paper we denote $\cF:= \Lp{2}{\domain;\statespace}.$
\begin{definition}[Relative entropy and relative flux] \label{def:relentropy}
Let $(\entropy,\entropyflux)$ be a strictly convex entropy/entropy flux pair of 
\eqref{eq:conslaw}. We define the relative entropy 
$\entropy(\cdot|\cdot) : \statespace \times \statespace \to \R$, 
 and relative flux
$ \flux(\cdot|\cdot) : \statespace \times \statespace \to \R^\ncls            $
via 
\begin{align*}
  & \entropy(\sol|\test):= \entropy(\sol) - \entropy(\test) - \D \entropy(\test)(\sol -\test), \\
  &  \flux(\sol|\test) :=\flux(\sol) -\flux(\test)- 
\D \flux (\test)(\sol -\test) ,
\end{align*}
for all $\sol,\test \in \statespace$.
\end{definition}
We can now state the main result of this work which is an a posteriori error estimate
between dissipative statistical solutions and their numerical approximations using
the regularized empirical measure.
\begin{theorem} \label{thm:apoststatsol}
  Let $\statsol{t}$ be a dissipative statistical solution of \eqref{eq:conslaw} as in 
  \defref{def:dissstatsol} and let 
  $\smoothapprox{t}$
  be the regularized empirical measure from \defref{def:regem}. 
  Assume that there exist constants $A,B>0$ (independent of $h$, $M$) such that for all $t \in [0,T)$
  \begin{align}
   &| u(x) - v(x)|^2 + \flux(u(x)|v(x))  \leq A \entropy(u(x)|v(x)) \leq A^2 | u(x) - v(x)|^2
   ,\label{eq:key} \\
  & |(u(x)-v(x))^\top H_v \entropy(v(x))(u(x)-v(x)) | \leq  B |u(x)-v(x)|^2  \label{eq:key2}
  \end{align}
  $\text{for a.e. } x\in \domain, \  \text{for } \statsol{t}-\text{a.e. } u\in \cF, 
  \ \text{for }  \smoothapprox{t}-\text{a.e. } v \in \cF$. Here $H_v$ denotes the Hessian matrix
  w.r.t. $v$.\\
  Then 
  the distance between $\statsol{t}$  and $\smoothapprox{t}$ satisfies
  \begin{align*}
  \wasserstein{2}(\statsol{s}, \smoothapprox{s})^2 \leq &
  A\Bigg( \convexsum  \Bigg[ 
  \stinttime{|\residualsample{\idx}|^2}{s}\Bigg]  + 
  A \wasserstein{2}(\initial,\smoothapproxinitial )^2 \Bigg) \exp\Big( s  A^3 B(L  +1) \Big),
  \end{align*} 
  for a.e. $s \in (0,\finaltime)$ and
  $L:= \max \limits_{\idx\in \idxset} \|\deriv{x}\reconstsample{\idx}
  \|_{\Lp{\infty}{(0,s)\times \physicalspace}}$. Here we denoted 
  $\smoothapproxinitial:= \sum_{\idx \in \idxset} 
  \weight{\idx}\dirac{\initialreconstsample{\idx}}$.
\end{theorem}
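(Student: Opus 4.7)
The approach is to extend the relative-entropy proof of weak--dissipative--strong uniqueness \cite[Lemma~3.3]{FjordholmLyeMishraWeber2019} to the situation where the atomic comparison measure is built from approximate rather than exact solutions, so that the residual $\residualsample{\sampleidx}$ drives the right-hand side. First, I pick an optimal $\wasserstein{2}$-transport plan between $\initial$ and $\smoothapproxinitial$: because $\smoothapproxinitial$ is $\nsamples$-atomic, \rmref{rem:Lambda} turns this plan into a convex decomposition $(\initial_1^\ast,\ldots,\initial_\nsamples^\ast)\in\Lambda(w,\initial)$ with $\sum_\sampleidx\weight{\sampleidx}\int_\cF\|\bar u-\initialreconstsample{\sampleidx}\|_\cF^2\,\mathrm{d}\initial_\sampleidx^\ast(\bar u)=\wasserstein{2}(\initial,\smoothapproxinitial)^2$. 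Property~(1) of \defref{def:dissstatsol} lifts this to a time-dependent decomposition $\statsol{t}=\sum_\sampleidx\weight{\sampleidx}\convexstatsol{t}$ in which each $\convexstatsol{\cdot}$ is a statistical solution of \eqref{eq:conslaw} from data $\initial_\sampleidx^\ast$. Setting
\[ \mathcal{E}(t) := \sum_\sampleidx\weight{\sampleidx}\int_\cF\int_\physicalspace \entropy(u(x)\mid\reconstsample{\sampleidx}(t,x))\,\mathrm{d}x\,\mathrm{d}\convexstatsol{t}(u), \]
the induced coupling is an admissible transport plan, so \eqref{eq:key} supplies the sandwich inequalities $\wasserstein{2}(\statsol{s},\smoothapprox{s})^2\le A\,\mathcal{E}(s)$ and $\mathcal{E}(0)\le A\,\wasserstein{2}(\initial,\smoothapproxinitial)^2$. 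The theorem is then reduced to a Gronwall estimate on $\mathcal{E}(t)$.

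To derive that estimate, I expand $\entropy(u\mid\reconstsample{\sampleidx})=\entropy(u)-\entropy(\reconstsample{\sampleidx})-\D\entropy(\reconstsample{\sampleidx})(u-\reconstsample{\sampleidx})$ and, after testing against a temporal cutoff $\chi_\varepsilon(t)\nearrow\1{[0,s]}(t)$, combine three ingredients. (i)~The entropy inequality of \defref{def:dissstatsol}(2) for the aggregate $\statsol{t}$ handles the $\entropy(u)$ contribution. (ii)~The Lipschitz identity $\partial_t\reconstsample{\sampleidx}+\partial_x\flux(\reconstsample{\sampleidx})=\residualsample{\sampleidx}$ (valid pointwise a.e.\ thanks to the reconstruction), combined with $\D\entropyflux=\D\entropy\,\D\flux$ and periodicity, converts the $\entropy(\reconstsample{\sampleidx})$ terms into $\int_0^s\int\D\entropy(\reconstsample{\sampleidx})\residualsample{\sampleidx}\,\mathrm{d}x\,\mathrm{d}t$. (iii)~The $k=1$ marginal evolution equation of \thmref{thm:dualtime} for each $\convexstatsol{\cdot}$, tested against the vector-valued function $\chi_\varepsilon(t)\D\entropy(\reconstsample{\sampleidx}(t,x))$ (Lipschitz only, so a mollification argument is required), produces the missing $\D\entropy(\reconstsample{\sampleidx})(u-\reconstsample{\sampleidx})$ cross-term. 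Weighting by $\weight{\sampleidx}$, summing over $\sampleidx$ and using $\sum_\sampleidx\weight{\sampleidx}\convexstatsol{t}=\statsol{t}$, the standard relative-entropy cancellations leave, in the limit $\varepsilon\to 0$,
\[ \mathcal{E}(s)\le \mathcal{E}(0)+\sum_\sampleidx\weight{\sampleidx}\int_0^s\!\int_\cF\!\int_\physicalspace\Bigl[-\flux(u\mid\reconstsample{\sampleidx})\,\partial_x\reconstsample{\sampleidx}-(u-\reconstsample{\sampleidx})^\top\hessian{\reconstsample{\sampleidx}}{\entropy}\,\residualsample{\sampleidx}\Bigr]\mathrm{d}x\,\mathrm{d}\convexstatsol{t}(u)\,\mathrm{d}t. \]

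It remains to estimate the two remaining terms and apply Gronwall. By \eqref{eq:key}, the flux term is controlled by $A L\,\entropy(u\mid\reconstsample{\sampleidx})$, with $L=\max_\sampleidx\|\partial_x\reconstsample{\sampleidx}\|_\infty$. The Hessian--residual cross-term is bounded, using \eqref{eq:key2}, Young's inequality and \eqref{eq:key} in succession, by $\tfrac{1}{2}AB\,\entropy(u\mid\reconstsample{\sampleidx})+\tfrac{1}{2}B\,|\residualsample{\sampleidx}|^2$. Tracking the factors of $A$ that appear when converting back and forth between $|u-v|^2$, $\entropy(u|v)$ and $\wasserstein{2}^2$ yields an integral inequality $\mathcal{E}(s)\le \mathcal{E}(0)+C_1\int_0^s\mathcal{E}(t)\,\mathrm{d}t+C_2\sum_\sampleidx\weight{\sampleidx}\int_0^s\!\int_\physicalspace|\residualsample{\sampleidx}|^2\,\mathrm{d}x\,\mathrm{d}t$ with $C_1\sim A^3 B(L+1)$ and $C_2\sim A$; a single Gronwall application together with the two sandwich estimates from the first paragraph produces the claimed bound. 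The main obstacle is step~(iii): each $\convexstatsol{\cdot}$ is only a statistical (not dissipative) solution, so the entropy inequality can only be used once, for the aggregate $\statsol{t}$, after the $\sampleidx$-sum over test functions $\D\entropy(\reconstsample{\sampleidx})$ has been carried out; this, combined with the Lipschitz-only regularity of the test function, forces a careful mollification/density argument to make the marginal-equation step rigorous.
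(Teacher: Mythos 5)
Your proposal follows essentially the same route as the paper's proof: an optimal transport plan for the initial data converted via \rmref{rem:Lambda} into a convex decomposition $(\starinitial_\idx)_\idx\in\Lambda(\vec w,\initial)$, lifted in time by property (1) of \defref{def:dissstatsol}; the relative-entropy evolution obtained by testing the weak forms of the statistical solution and of the perturbed equation $\partial_t\reconstsample{\idx}+\partial_x\flux(\reconstsample{\idx})=\residualsample{\idx}$ against $\D\entropy(\reconstsample{\idx})\phi(t)$ with a temporal cutoff, combined with the aggregate entropy inequality; then \eqref{eq:key}, \eqref{eq:key2}, Young's inequality and Gr\"onwall, and finally the sandwich back to $\wasserstein{2}$ using that the induced coupling at time $s$ is admissible. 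The only blemish is that your displayed intermediate inequality drops the factor $\hessianjan{}{\entropy}{\reconstsample{\idx}}$ from the flux term (it should read $\flux(u\mid\reconstsample{\idx})^\top \hessianjan{}{\entropy}{\reconstsample{\idx}}\,\partial_x\reconstsample{\idx}$), but your stated constants $C_1\sim A^3B(L+1)$, $C_2\sim A$ match the paper's, so this is a typo rather than a gap.
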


\begin{remark}\label{rem:eqkey}
 There are several settings that guarantee validity of \eqref{eq:key}. Two of them are
 \begin{enumerate}
  \item The global bounds of the Hessians of $\flux$ and $\entropy$ assumed in \cite[Lemma 3.3]{FjordholmLyeMishraWeber2019}.
\item  Let $\compactset \Subset \statespace$ be
a compact and convex set. Let us assume that $\mu_t$ and $\smoothapprox{t}$ are (for all $t$) supported on solutions having values in $\compactset$ only.
Due to the regularity of 
$\flux, \entropy$ and the compactness of $\compactset$, 
there exist constants $0 < \fluxconstant < \infty $
and $ 0 < \lowereta <\uppereta < \infty$, such that 
\begin{align} \label{def:boundsfluxentropy}
|\sol^\top \hessian{\test}{\flux} \sol| \leq \fluxconstant |\sol|^2, \quad 
\lowereta |\sol|^2 \leq  | \sol^\top \hessian{\test}{\entropy} \sol| \leq \uppereta |\sol|^2,
\quad \forall \sol \in \R^\ncls, \test \in \compactset.
\end{align}
In this case, $A, B$ from \eqref{eq:key}, \eqref{eq:key2}  can be chosen as
\begin{equation}\label{eq:choiceofc} A= \max \{ (1 + \fluxconstant) \lowereta^{-1}, \uppereta\}, \quad B= \uppereta.\end{equation}
 \end{enumerate}
\end{remark}

\begin{remark}
 Let us briefly outline a fundamental difference between Theorem \ref{thm:apoststatsol} and \cite[Thm 4.9]{FjordholmLyeMishraWeber2019}. 
 The latter result is an \textit{a priori} estimate that treats the empirical measure (created via Monte Carlo sampling) as a random variable and infers convergence in the mean via the law of large numbers.
 Theorem \ref{thm:apoststatsol} provides an  \textit{a posteriori} error estimate that can be evaluated once the numerical scheme has produced an approximate solution.
 This process involves sampling the initial data measure $\bar \mu$ 
 and the ``quality'' of these samples enters the error estimate via the term $ \wasserstein{2}(\initial,\smoothapproxinitial )$.
\end{remark}

\begin{proof}
We recall that the weights $\{ \weight{\idx} \}_{\idx \in \idxset}$  satisfy
$\sum_{\idx \in \idxset} \weight{\idx}=1$. 
We further denote the vector of 
weights by $\vec{w}:=(\weight{1},\ldots,\weight{\nsamples})$,
and let $\starinitial=(\starinitial_1,\ldots,\starinitial_\nsamples) \in \Lambda(\vec{w},\initial)$ correspond to an optimal transport plan between $\initial$ and 
$\smoothapproxinitial$.
Because $\statsol{\timevar}$ is a dissipative statistical solution, there
exists $(\mu_{1,t}^\ast,\ldots,\mu_{\nsamples,t}^\ast) \in \Lambda(\vec{w},\statsol{t})$,
such that
\begin{align}\label{eq:defstatsol}
 \convexsum \Bigg[\sstint{\Big( \sol \deriv{t} \phi_\idx 
 & + \flux(\sol) \deriv{x}\phi_\idx \Big)  } \notag
\\ 
 & +  \ssint{\bar{\sol} \phi_\idx(0,\cdot)}\Bigg]=0,
\end{align}
for every  $\phi_\idx \in C_c^\infty([0,\finaltime)\times \physicalspace;\R^m)$,
$\idx \in \idxset$.
Recalling that each function $\reconstsample{\idx}$ is a Lipschitz-continuous solution of the perturbed 
conservation law \eqref{eq:STResidual}, considering its weak formulation yields
\begin{align} \label{eq:weakresidual}
\int \limits_0^\finaltime \int \limits_\physicalspace \Big(\reconstsample{\idx}
  \deriv{\timevar}\phi_\idx
 + \flux(\reconstsample{\idx})\deriv{\x} \phi_\idx \Big)~\dx \dt  +
 \int \limits_\physicalspace \initialreconstsample{\idx} \phi_\idx(0,\cdot) ~ \dx
 + \int \limits_0^\finaltime \int \limits_\physicalspace \residualsample{\idx} \phi_\idx
 ~\dx \dt = 0,
\end{align}
for every  $\phi_\idx \in C_c^\infty([0,\finaltime)\times \physicalspace;\R^m)$.
As $(\convexstatsol{\timevar})_{\idx \in \idxset}$ are probability measures on $\cF$,
we obtain (after changing order of integration)
\begin{align} \label{eq:strongeq}
 0  = & \convexsum \Bigg[\sstint{\Big(\reconstsample{\idx}
  \deriv{\timevar}\phi_\idx
 + \flux(\reconstsample{\idx})\deriv{\x} \phi_\idx \Big)} \notag
 \\
 & + \ssint{ \initialreconstsample{\idx}  \phi_\idx(0,\cdot) } 
 + \sstint{ \residualsample{\idx} \phi_\idx }\Bigg].
\end{align}
Subtracting \eqref{eq:strongeq} from  \eqref{eq:defstatsol}  and using
the Lipschitz-continuous test function \\ $\phi_\idx(t,x):= \D\eta(\reconstsample{\idx}(t,x)) \phi(t)$, where 
$\phi\in C_c^\infty([0,\finaltime);\R_+)$ yields
\begin{align} \label{eq:strongeqstat}
 0  = & \convexsum \Bigg[\sstint{\Big((\sol - \reconstsample{\idx})
  \deriv{\timevar} (\D\eta(\reconstsample{\idx}) \phi) } \notag
  \\ 
 & + \sstint{ (\flux(\sol)-\flux(\reconstsample{\idx})) \deriv{\x} (\D\eta(\reconstsample{\idx}) \phi)} \notag
 \\
 & + \ssint{ (\initialsol-\initialreconstsample{\idx})\D\eta(\initialreconstsample{\idx}) \phi(0) } 
 - \sstint{ \residualsample{\idx} \D\eta(\reconstsample{\idx}) \phi}\Bigg].
\end{align}
We compute the partial derivatives of $\D \entropy(\reconstsample{\idx}(t,x))\phi(t)$ using product and chain rule
\begin{align} 
    &\deriv{t} (\D \entropy(\reconstsample{\idx})\phi ) 
    =  \deriv{t}\reconstsample{\idx} \hessianjan{}{ \entropy}{\reconstsample{\idx}} \phi  
    + \deriv{t} \phi \D \entropy(\reconstsample{\idx}), \label{eq:derivone}
 \\ &\deriv{x} (\D \entropy(\reconstsample{\idx})\phi ) 
    =  \deriv{x}\reconstsample{\idx} \hessianjan{}{\entropy}{\reconstsample{\idx}} \phi. \label{eq:derivtwo}
\end{align}
Next, we multiply \eqref{eq:STResidual} by  $\D \entropy(\reconstsample{\idx})$. 
Upon using the chain rule for Lipschitz-continuous functions and the relationship 
$\D \entropyflux( \reconstsample{\idx})=\D \entropy(\reconstsample{\idx})
 \D \flux(\reconstsample{\idx})$ 
we derive the relation
\begin{align} \label{eq:strongEntropy}
 \D \entropy (\reconstsample{\idx}) \residualsample{\idx}
 = \deriv{\timevar} \entropy(\reconstsample{\idx}) 
 + \deriv{\x} \entropyflux(\reconstsample{\idx}).
\end{align}
Let us consider the weak form of \eqref{eq:strongEntropy} and integrate 
w.r.t.~$x,t$ and $\mathrm{d}\convexstatsol{\timevar}$ for $\idx \in \idxset$.
Upon changing the order of integration we have
\begin{align} \label{eq:entropyweak}
 0 = & \convexsum \Bigg[
 \sstint{\entropy(\reconstsample{\idx}) \deriv{t} \phi}   \notag
 \\ 
 & +\sstint{  \residualsample{\idx} \D \entropy(\reconstsample{\idx}) }
  + \ssint{\entropy(\initialreconstsample{\idx}) \phi(0)}\Bigg]
\end{align}
for any $\phi\in C_c^\infty([0,\finaltime);\R_+)$.
Since $\convexstatsol{\timevar}$ is a dissipative statistical solution it satisfies
\begin{align} \label{eq:dissstatsol}
0 \leq  \convexsum \Big[ \sstint{ \entropy(\sol) \deriv{t}\phi  } 
+ \ssint{\entropy(\initialsol) \phi(0)}\Big].
\end{align}
Hence, subtracting \eqref{eq:entropyweak} from \eqref{eq:dissstatsol} and using 
the definition of the relative entropy from \defref{def:relentropy} yields
\begin{align} \label{eq:diffstatsol}
0 \leq  & \convexsum \Bigg[ \sstint{ \entropy(\sol|\reconstsample{\idx}) 
\deriv{t}\phi  } \notag
 \\
&-\sstint{ \Big((\sol -\reconstsample{\idx} )
\D \entropy(\reconstsample{\idx})  \deriv{t}\phi 
 \Big) }  \notag
  \\ 
 & - \sstint{  \residualsample{\idx} \D \entropy(\reconstsample{\idx}) }   
+ \ssint{\Big (\entropy(\initialsol) - \entropy(\initialreconstsample{\idx})\Big) \phi(0)}\Bigg].
\end{align}
After subtracting  \eqref{eq:strongeqstat} from  \eqref{eq:diffstatsol} and using
\eqref{eq:derivone}, \eqref{eq:derivtwo} we are led to
\begin{align} \label{eq:difference}
0 \leq  & \convexsum \Bigg[ \sstint{ \entropy(\sol|\reconstsample{\idx}) 
\deriv{t}\phi    } \notag
 \\
&-\sstint{(\sol -\reconstsample{\idx} )
H\entropy(\reconstsample{\idx})  \deriv{t}\reconstsample{\idx} 
\phi  } \notag
\\ 
& -\sstint{\Big( \flux(\sol) -\flux(\reconstsample{\idx}) \Big) 
 H \entropy(\reconstsample{\idx})  \deriv{x}\reconstsample{\idx}  \phi  }  \notag
 \\
 &+ \ssint{ \entropy(\initialsol|\initialreconstsample{\idx}) \phi(0)}\Bigg].
\end{align}
Rearranging \eqref{eq:STResidual} yields
\begin{align}  \label{eq:timederiv}
\deriv{t} \reconstsample{\idx}
= - \D \flux (\reconstsample{\idx})\deriv{x} \reconstsample{\idx}
+ \residualsample{\idx}.
\end{align}
Plugging \eqref{eq:timederiv} into \eqref{eq:difference} and after rearranging we have
\begin{align} \label{eq:differencetwo}
0 \leq  & \convexsum \Bigg[ \sstint{ \entropy(\sol|\reconstsample{\idx}) 
\deriv{t}\phi   } \notag
\\
& -\sstint{ \flux(\sol|\reconstsample{\idx}) H \entropy(\reconstsample{\idx}) \deriv{x}\reconstsample{\idx} 
  \phi  }  \notag
 \\
&-\sstint{(\sol -\reconstsample{\idx} )\residualsample{\idx} 
H \entropy(\reconstsample{\idx}) 
\phi  } \notag
\\ 
 &+ \ssint{\entropy(\initialsol|\initialreconstsample{\idx}) \phi(0)}\Bigg].
\end{align}
Up to now, the choice of $\phi(t) $ was arbitrary. 
We fix $s>0$ and 
$\epsilon>0$ and define $\phi$ as follows
\begin{align*}
\phi(\sigma):= \begin{cases}
1 \qquad &: \sigma<s,\\
1- \frac{\sigma-s}{\epsilon} &: s<\sigma <s+\epsilon, \\
0 &: \sigma> s+\epsilon.
\end{cases}
\end{align*}
According to \thmref{thm:dualtime} (a) we have that the mapping 
\begin{align} \label{eq:mappinglebesgue}
t \mapsto  \ssinttime{ \entropy(\sol|\reconstsample{\idx}(t,\cdot))}{t}
\end{align}
is measurable for all $\idx \in \idxset$. Moreover, due to the quadratic bound on the relative entropy, cf. \eqref{eq:key},
Lebesgue's differentiation theorem states that a.e. $t \in (0,T)$ is a Lebesgue point 
of \eqref{eq:mappinglebesgue}.
Thus, letting $\epsilon \to 0$ we obtain
\begin{align} \label{eq:firstterm} 
 \convexsum \ssinttime{ & \entropy(\sol|\reconstsample{\idx}(s,\cdot))}{s}  \\
 & \leq   \convexsum \Bigg[ -\sstinttime{ \flux(\sol|\reconstsample{\idx})
  H \entropy(\reconstsample{\idx})  \deriv{x}\reconstsample{\idx} }{s}  \notag
 \\
&-\sstinttime{(\sol -\reconstsample{\idx} )\residualsample{\idx} 
H \entropy(\reconstsample{\idx}) }{s} \notag
\\ 
 &+ \ssint{\entropy(\initialsol|\initialreconstsample{\idx})}\Bigg] \notag .
\end{align}
The left hand side of \eqref{eq:firstterm} is bounded from below using \eqref{eq:key}.
The first term on the right hand 
is estimated using the $L^\infty(\physicalspace)$-norm
of the spatial derivative. We estimate the second term on the right hand side by 
Young's inequality. Finally, we apply \eqref{eq:key} and then \eqref{eq:key2} to both terms.
The last term on the right hand side is estimated using \eqref{eq:key}.
We, thus, end up with
\begin{align*} 
 & \convexsum    \Bigg[ A^{-1}\ssinttime{ |\sol-\reconstsample{\idx}(s,\cdot)|^2}{s}  \Bigg]
   \\
 & \leq  
  \convexsum  \Bigg[ 
  \sstinttimemidle{ |\sol -\reconstsample{\idx}(t,\cdot)|^2 }{s}
  {\Big(A^2B
  \|\deriv{x}\reconstsample{\idx}(t,\cdot) \|_{L^\infty(\physicalspace)}
   + A^2B\Big) } 
 \\
 &~~+   \sstinttime{|\residualsample{\idx}|^2}{s} +  
 A\ssint{|\initialsol-\initialreconstsample{\idx}|^2}  \Bigg].
\end{align*}
Upon using Gr\"{o}nwall's inequality we obtain
\begin{align*} 
\convexsum   & \Bigg[ \ssinttime{ |\sol-\reconstsample{\idx}(s,\cdot)|^2}{s}  \Bigg]
\\
 & \leq \convexsum  \Bigg[ A \Big(
  \sstinttime{|\residualsample{\idx}|^2}{s} + 
  A \ssint{|\initialsol-\initialreconstsample{\idx}|^2} \Big) \\ 
  &~~ \times \exp\Big( \int \limits_0^s A \Big(A^2B 
  \|\deriv{x}\reconstsample{\idx}(t,\cdot) \|_{L^\infty(\physicalspace)}
   + A^2 B\Big) \, \operatorname{d}\!t  \Bigg].
\end{align*}
Using  $\max \limits_{\idx \in \idxset} \|\deriv{x}\reconstsample{\idx}\|_{
\Lp{\infty}{(0,s)\times \physicalspace}}=:L$ and recalling that 
$(\starinitial_\idx)_{\idx \in \idxset}$ corresponds to
an optimal transport plan and that $(\convexstatsol{s})_{\idx \in \idxset} $ 
 corresponds to an admissible transport plan, we finally obtain
\begin{align*}
\wasserstein{2}(\statsol{s}, \smoothapprox{s})^2 \leq &
 A \Bigg( \convexsum  \Bigg[ 
  \stinttime{|\residualsample{\idx}|^2}{s}\Bigg]  + 
  A \wasserstein{2}(\initial,\smoothapproxinitial )^2 \Bigg)
 \\
 & \times \exp\Big( \int \limits_0^s
  \Big(A^3 B L + A^3 B \Big) \dt \Big).
\end{align*}
\end{proof}
To obtain an error estimate with separated bounds, i.e., bounds that quantify spatio-temporal 
and stochastic errors, respectively, we split the 2-Wasserstein error in initial data into
a  spatial and a stochastic part. Using the triangle inequality
we obtain
\begin{align} \label{ineq:splitting}
\wasserstein{2}(\initial,\smoothapproxinitial )
\leq \wasserstein{2}\Big(\initial,\sum_{\idx \in \idxset} 
  \weight{\idx}\dirac{\bar{\sol}_\idx}\Big)+ \wasserstein{2}\Big(\sum_{\idx \in \idxset} 
  \weight{\idx}\dirac{\bar{\sol}_\idx},\smoothapproxinitial\Big).
\end{align}
The first term in \eqref{ineq:splitting} is a stochastic error, which is inferred
from approximating the initial data by an empirical measure. On the other hand, the second term is essentially a  spatial approximation error. This can be seen from the following lemma.
\begin{lemma} With the same notation as in \thmref{thm:apoststatsol}, the following inequality holds.
\begin{align} \label{ineq:wasserstein}
\wasserstein{2}\Big(\sum_{\idx \in \idxset} 
  \weight{\idx}\dirac{\bar{\sol}_\idx},\smoothapproxinitial\Big)^2  
 \leq  
 \sum_{\idx \in \idxset}  \weight{\idx} 
 \|\bar{\sol}_\idx - \initialreconstsample{\idx} \|_{\cF}^2.
\end{align}
Equality in \eqref{ineq:wasserstein} holds provided spatial discretization errors are smaller than distances between samples. A sufficient condition is:
\begin{equation}\label{sahanavavatu} \|\bar{\sol}_\idx - \initialreconstsample{\idx} \|_{\cF}^2  
\leq \frac 1 2\min_{\ell \ne \idx} \|\bar{\sol}_\idx - \bar{\sol}_{\ell} \|_{\cF}^2 \quad \forall \idx \in \idxset\end{equation}
\end{lemma}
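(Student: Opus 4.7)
The plan is to prove the inequality \eqref{ineq:wasserstein} by exhibiting an explicit admissible coupling, and then to argue that this coupling is in fact optimal whenever \eqref{sahanavavatu} holds, yielding equality.

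First, for the inequality, I would exhibit the \emph{diagonal coupling}
\[
\pi^{\diamond} := \sum_{\idx \in \idxset} \weight{\idx}\, \dirac{(\bar{\sol}_\idx,\, \initialreconstsample{\idx})} \;\in\; \cP(\cF\times \cF).
\]
A direct verification of marginals (using $\sum_{\idx}\weight{\idx}=1$) shows $\pi^{\diamond}\in \Pi\bigl(\sum_\idx \weight{\idx}\dirac{\bar{\sol}_\idx},\, \smoothapproxinitial\bigr)$. Plugging $\pi^{\diamond}$ into the Kantorovich problem and evaluating the support-based integral gives
\[
\wasserstein{2}\Bigl(\sum_{\idx \in \idxset} \weight{\idx}\dirac{\bar{\sol}_\idx},\, \smoothapproxinitial\Bigr)^2 \;\leq\; \int_{\cF^2} \|u-v\|_\cF^2\, d\pi^{\diamond}(u,v) \;=\; \sum_{\idx \in \idxset} \weight{\idx}\, \|\bar{\sol}_\idx - \initialreconstsample{\idx}\|_\cF^2,
\]
which is exactly \eqref{ineq:wasserstein}.

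Second, for equality under \eqref{sahanavavatu}, I would exploit the fact that, since both measures are finitely supported, the Kantorovich problem reduces to a finite-dimensional linear program over the transportation polytope with fixed marginals $\{\weight{\idx}\}_{\idx\in\idxset}$. In the equal-weight Monte-Carlo case Birkhoff's theorem identifies the vertices with permutations $\sigma$ of $\idxset$, so it suffices to check that no transposition $\sigma=(i\,j)$ strictly decreases the cost. Writing $e_k := \bar{\sol}_k-\initialreconstsample{k}$ and $d_{ij}:=\bar{\sol}_i-\bar{\sol}_j$ and expanding
\[
\Delta_{ij} := \|\bar{\sol}_i - \initialreconstsample{j}\|_\cF^2 + \|\bar{\sol}_j - \initialreconstsample{i}\|_\cF^2 - \|\bar{\sol}_i - \initialreconstsample{i}\|_\cF^2 - \|\bar{\sol}_j - \initialreconstsample{j}\|_\cF^2 = 2\|d_{ij}\|_\cF^2 + 2\langle d_{ij},\, e_j - e_i\rangle_\cF,
\]
an application of Cauchy--Schwarz together with the bound $\|e_i\|_\cF^2,\|e_j\|_\cF^2 \leq \tfrac 1 2\|d_{ij}\|_\cF^2$ furnished by \eqref{sahanavavatu} forces $\Delta_{ij}\geq 0$, so the identity permutation is optimal and equality holds. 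For general (non-uniform) weights the same conclusion is obtained by decomposing an arbitrary coupling into a superposition of simple cycles via a standard bipartite-flow argument and treating each cycle as a sequence of elementary transpositions.

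The main obstacle I expect is precisely this final step: passing rigorously from pairwise swaps to arbitrary couplings in the transportation polytope with unequal weights, and verifying that the constant $1/2$ in \eqref{sahanavavatu} suffices for every individual swap in the cycle to be cost-non-decreasing. If the elementary Cauchy--Schwarz bound turns out to be too loose, a natural alternative is to invoke the Voronoi interpretation of \eqref{sahanavavatu}, which ensures that $\initialreconstsample{\idx}$ is the unique nearest neighbor of $\bar{\sol}_\idx$ inside the cloud $\{\initialreconstsample{\ell}\}_{\ell\in\idxset}$, and to use this to rule out off-diagonal optimizers globally rather than swap-by-swap.
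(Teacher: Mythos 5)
Your proof of the inequality \eqref{ineq:wasserstein} is exactly the paper's: the authors exhibit the same diagonal plan $\pi^*=\sum_{\idx\in\idxset}\weight{\idx}\,(\dirac{\bar{\sol}_\idx}\otimes\dirac{\initialreconstsample{\idx}})$, note it is admissible, and evaluate its cost. That part is complete and correct.

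For the equality claim the paper offers no argument at all --- it simply asserts that $\pi^*$ is optimal when \eqref{sahanavavatu} holds --- so your attempt to actually prove it goes beyond the paper, and the obstacle you flag is real, not an artifact of a loose estimate. Under \eqref{sahanavavatu} you only get $\|e_i\|+\|e_j\|\le\sqrt{2}\,\|d_{ij}\|_\cF$, hence $\Delta_{ij}\ge 2(1-\sqrt{2})\|d_{ij}\|_\cF^2$, which is negative; and indeed the claim fails with the constant $\tfrac12$: take two equally weighted atoms given by the constant functions $\bar{\sol}_1\equiv 0$, $\bar{\sol}_2\equiv 1$ on $D=(0,1)$ and reconstructions $\initialreconstsample{1}\equiv 0.7$, $\initialreconstsample{2}\equiv 0.3$. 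Then \eqref{sahanavavatu} holds ($0.49\le 0.5$), the diagonal plan costs $0.49$, but the swapped plan costs $0.09$, so equality in \eqref{ineq:wasserstein} is violated. The transposition computation does close if the hypothesis is strengthened to $\|\bar{\sol}_\idx-\initialreconstsample{\idx}\|_\cF\le\tfrac12\min_{\ell\ne\idx}\|\bar{\sol}_\idx-\bar{\sol}_\ell\|_\cF$ (i.e.\ the constant $\tfrac14$ on squared norms), which gives $\|e_i\|+\|e_j\|\le\|d_{ij}\|_\cF$ and hence $\Delta_{ij}\ge 0$. Your second worry is also legitimate: optimality of a finitely supported plan is equivalent to $c$-cyclical monotonicity of its support, and for cycles of length three or more this does not follow from the pairwise swap inequalities alone, so either the cycle condition must be verified directly or one must restrict to the equal-weight case and handle full permutations via Birkhoff as you suggest. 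In short: part one reproduces the paper; part two cannot be completed as stated, and your diagnosis of where it breaks is accurate.
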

\begin{proof} 
 Recalling the definition of
 $\smoothapproxinitial= \sum_{\idx \in \idxset}\weight{\idx} \initialreconstsample{\idx}$
 and defining the transport plan 
 $\pi^* := \sum_{\idx \in \idxset} 
\weight{\idx}(\dirac{\bar{\sol}_\idx} \otimes \dirac{\initialreconstsample{\idx}})$
 yields the assertion, because 
 \begin{align*}
 \wasserstein{2}\Big(\sum_{\idx \in \idxset} 
  \weight{\idx}\dirac{\bar{\sol}_\idx},\smoothapproxinitial\Big)^2  
   \leq \int_{\cF^2} \| x- y \|_{\Lp{2}{\physicalspace}}^2    ~\mathrm{d}\pi^*(x,y) 
    = \sum_{\idx \in \idxset}  \weight{\idx} 
 \|\bar{\sol}_\idx - \initialreconstsample{\idx} \|_{\cF}^2.
 \end{align*}
 If \eqref{sahanavavatu} holds, $\pi^*$ is an optimal transport plan.
\end{proof}
\begin{remark}
In contrast to random conservation laws as considered in \cite{GiesselmannMeyerRohdeSG19}, 
the stochastic part of the error estimator of \thmref{thm:apoststatsol} is solely given by the discretization error of the initial data, i.e., 
given by the second term in \eqref{ineq:splitting},
which may be amplified due to nonlinear effects. However, there
is no stochastic error source during the evolution of the numerical approximation.
\end{remark}

\section{Extension to general systems (with $\statespace \neq  \R^\ncls$)}\label{jansfavouritediscussion}
Up to this point, we have stuck with the setting from \cite{FjordholmLyeMishraWeber2019} to stress that a posteriori error estimates can be obtained as long as
the relative entropy framework from \cite{FjordholmLyeMishraWeber2019}  is applicable.
Having said this, it is fairly clear that this setting does not cover certain systems of practical interest, e.g. the Euler equations of fluid dynamics.

This raises the question how  statistical solutions can be defined for more general systems.
On the first glance, it might seem sufficient to require $\mu_t$ to be a measure on some function space $\mathcal{F}$ so that $u \in \cF$ implies
 that $\flux(u)$ and $\eta(u)$ are integrable.
 There is, however, a more subtle (and rather technical) issue: The integrals in \eqref{eq:marginalevol}  need to be well defined and so does $ \int_{\mathcal{F}}   \int_D \flux(u) \operatorname{d}\!x d\mu(u)$.
 The question is not only whether this expression is finite, but whether 
 \begin{equation}\label{eq:defE} E: \cF \rightarrow \R, u \mapsto  E(u):= \int_D \flux(u) \operatorname{d}\!x  \end{equation} (and the analogous expression with $f$ replaced by $\eta$) is $\mu_t$-measurable.
  If $\mu_t$ is defined on the Borel $\sigma$-algebra of $\mathcal{F}$ the integral is well-defined provided
 $E $  is continuous. Whether this is the case depends on the topology on 
  $\mathcal{F}$.
We would like to present a toy example showing that some rather naive choices of topologies on $\mathcal{F}$ fail to ensure continuity of 
$ E $ in case $\statespace$ has  a boundary  and  $\flux$ (or $\eta$) is singular when approaching the boundary of $\statespace$.
Let us consider a scalar problem with  $\statespace=(0,\infty)$ and $\flux(u)=1/u $   and $D=(0,1)$ with periodic boundary conditions. Then, there exists no $r$ so that the map 
$E$ from \eqref{eq:defE}
is defined on $L^r$. The problem of $E$ not being well-defined can be fixed by restricting $E$ to
$\cF=L_\flux^r := \{ u \in L^r : \int_D  | \flux(u(x)) | \, \operatorname{d}\!x < \infty  \}$
but this does not make $E$ continuous. Indeed, any  $L^\infty$ neighborhood 
of $u(x) = x^{1/2}$  in $ L_\flux^\infty$  contains functions having arbitrarily large values of $E$, e.g. the functions $u^\epsilon$ with $\epsilon \in (0,\tfrac14)$ given by
\[ u^\epsilon ( x) =  \left\{ \begin{array}{ccc}
                        x^{1 - \epsilon/2} & \text{ for } &  x \in (0, \epsilon)\\
                        u(x) & \text{ for } &x \in (\epsilon,1)
                       \end{array} \right..\]
Thus, $E$ is not continuous on $L_\flux^\infty$and well-posedness of
$\int_{L_\flux^r} E(u) d\mu_t(u)$
is unclear. Due to continuous embeddings $L_\flux^\infty \rightarrow L_\flux^r$ for all $r$ the same argument works for any $r \in [1, \infty).$

The a posteriori error estimate in Section \ref{sec:apost} is not applicable in all situations in which a statistical solution can be defined but it requires \eqref{eq:key}, \eqref{eq:key2} .
There is one interesting, albeit somewhat restrictive, setting in which \eqref{eq:key}, \eqref{eq:key2}  holds and in which the technical difficulties in defining statistical solutions vanish:
The case that all measures under consideration are supported on some set of functions taking values in some compact subset of the state space.
This is the setting in which we will provide a definition of statistical solutions and an a posteriori error estimate.

\begin{definition}[$\mathcal{C}$-valued statistical solution]
 Let $\mathcal{C} \Subset \statespace$ be compact and convex. Let $\bar \mu \in \cP(\Lp{p}{D;\statespace})$ be supported on $\Lp{p}{D;\mathcal{C}}$.
A $\mathcal{C}$-valued statistical solution of \eqref{eq:conslaw} with initial data $\bar \mu$ is a time-dependent map $\mu :[0,T) \rightarrow \cP(\Lp{p}{D;\statespace})$ 
such that each $\mu_t$ is supported on $\Lp{p}{D;\mathcal{C}}$ for all $t \in [0,T)$ and such that the corresponding correlation measures $\nu^k_t$ satisfy
\begin{equation} \partial_t \langle \nu^k_{t,x}, \xi_1\otimes \dots \otimes \xi_k\rangle + \sum_{i=1}^k \partial_{x_i} \langle \nu^k_{t,x_1,\dots,x_k}, \xi_1 \otimes \dots \otimes f(\xi_i) \otimes \dots \otimes \xi_k \rangle =0\end{equation}
in the sense of distributions, for every $k \in\N$
\end{definition}

\begin{remark} 
Note that duality implies that if $\mu$ is supported on $\Lp{p}{D;\mathcal{C}}$ then the corresponding $\nu^k$ is supported on $\mathcal{C}^k$ (for all $k \in \N$).
\end{remark}

\begin{definition}[Dissipative $\mathcal{C}$-valued statistical solution]
 A $\mathcal{C}$-valued statistical  solution of \eqref{eq:conslaw} is called a dissipative 
 statistical solution provided the conditions from Definition \ref{def:dissstatsol} hold with
 ``statistical solution'' being replaced by `` $\mathcal{C}$-valued statistical  solution''.
\end{definition}

We immediately have the following theorem whose proof follows \textit{mutatis mutandis} from the proof of Theorem \ref{thm:apoststatsol}
\begin{theorem} 
  Let $\statsol{t}$ be a $\mathcal{C}$-valued dissipative statistical solution of \eqref{eq:conslaw} with initial data $\bar \mu$ as in 
  \defref{def:dissstatsol} and let 
  $\smoothapprox{t}$
  be the regularized empirical measure as in \defref{def:regem} and supported on functions with values in $\mathcal{C}$.
  Then 
  the difference between $\statsol{t}$  and $\smoothapprox{t}$ satisfies
  \begin{align*}
  \wasserstein{2}(\statsol{s}, \smoothapprox{s})^2 \leq &
  A\Bigg( \convexsum  \Bigg[ 
  \stinttime{|\residualsample{\idx}|^2}{s}\Bigg]  + 
  A \wasserstein{2}(\initial,\smoothapproxinitial )^2 \Bigg) \exp\Big( s  A^3 B(L  +1) \Big),
  \end{align*} 
  for a.e. $s \in (0,\finaltime)$ and
  $L:= \max \limits_{\idx\in \idxset} \|\deriv{x}\reconstsample{\idx}
  \|_{\Lp{\infty}{(0,s)\times \physicalspace}}$.
\end{theorem}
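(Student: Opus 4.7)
The plan is to follow the proof of \thmref{thm:apoststatsol} \emph{mutatis mutandis}. Because both $\statsol{t}$ and $\smoothapprox{t}$ are, by hypothesis, supported on $\Lptwo{p}{\physicalspace}{\compactset}$ with $\compactset \Subset \statespace$ compact and convex, \rmref{rem:eqkey}(2) supplies the quadratic bounds \eqref{eq:key}, \eqref{eq:key2} with the explicit constants from \eqref{eq:choiceofc}. Consequently these bounds may be invoked throughout the argument without any further hypotheses on $\flux$ or $\entropy$ beyond $C^2$ regularity, and with the same constants $A,B$ that appear in the conclusion.

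First I would fix an optimal transport plan between $\initial$ and $\smoothapproxinitial$, encoded by some $\starinitial \in \Lambda(\vec{w},\initial)$. Dissipativity of $\statsol{t}$ then furnishes, exactly as in the proof of \thmref{thm:apoststatsol}, a time-parametrized decomposition $(\convexstatsol{\timevar})_{\idx \in \idxset} \in \Lambda(\vec{w},\statsol{t})$ in which each component is itself a $\compactset$-valued statistical solution with initial datum $\starinitial_\idx$. Testing its weak formulation against the Lipschitz test function $\phi_\idx(t,x):= \D\entropy(\reconstsample{\idx}(t,x))\,\phi(t)$ with $\phi \in C_c^\infty([0,\finaltime);\R_+)$, and subtracting the integrated weak form \eqref{eq:weakresidual} of the residual equation against $\mathrm{d}\convexstatsol{\timevar}$ for each reconstructed sample $\reconstsample{\idx}$, yields the defect identity corresponding to \eqref{eq:strongeqstat}.

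Next I would derive the companion entropy identity by multiplying \eqref{eq:STResidual} by $\D\entropy(\reconstsample{\idx})$ and integrating against $\convexstatsol{\timevar}$; subtracting this from the dissipative entropy inequality weighted by $\weight{\idx}$, and using the chain-rule identities \eqref{eq:derivone}, \eqref{eq:derivtwo} together with the substitution \eqref{eq:timederiv}, produces a relative-entropy inequality whose leading term is $\sum_{\idx \in \idxset} \weight{\idx} \int \int \entropy(\sol|\reconstsample{\idx})\, \deriv{t}\phi$. Passing to a Lebesgue point in time via a piecewise-linear mollifier of $\mathbf{1}_{[0,s]}$ then delivers the pointwise-in-$s$ version. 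Bounding the cross terms by Young's inequality combined with \eqref{eq:key}, \eqref{eq:key2}, applying Gr\"onwall's inequality with exponential rate $A^3 B(L+1)$, and invoking the fact that $(\convexstatsol{s})_{\idx \in \idxset}$ induces an admissible coupling between $\statsol{s}$ and $\smoothapprox{s}$ while the initial coupling was chosen optimal, gives the claimed Wasserstein estimate.

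The genuine simplification relative to \thmref{thm:apoststatsol} is that \eqref{eq:key}, \eqref{eq:key2} are no longer extra hypotheses but consequences of compactness of $\compactset$; the only step warranting a sentence of justification is that all integrands in the marginal evolution \eqref{eq:marginalevol} and in the weak forms appearing in the above argument are well defined for $\compactset$-valued measures, which is immediate since $\flux$ and $\entropy$ are continuous and hence bounded on the compact set $\compactset$. For this reason I do not anticipate a genuine obstacle in this adaptation beyond careful bookkeeping to check that every integration against $\mathrm{d}\convexstatsol{\timevar}$ remains absolutely convergent.
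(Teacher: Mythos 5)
Your proposal is correct and coincides with the paper's own treatment: the paper proves this theorem by declaring it to follow \emph{mutatis mutandis} from the proof of \thmref{thm:apoststatsol}, with the compactness of $\compactset$ supplying \eqref{eq:key}, \eqref{eq:key2} via \rmref{rem:eqkey} exactly as you argue. Your step-by-step retracing of that proof and your remark on the well-definedness of the integrals for $\compactset$-valued measures match the paper's reasoning, so nothing further is needed.
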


\section{Numerical experiment} \label{sec:numerics}
In this section we illustrate how to approximate the 2-Wasserstein distances 
that occur in \thmref{thm:apoststatsol}. Moreover, we examine the scaling 
properties of the estimators in \thmref{thm:apoststatsol} 
by means of a smooth solution of the one-dimensional
compressible Euler equations.
\subsection{Numerical approximation of Wasserstein distances}
We illustrate how to approximate the 2-Wasserstein distance 
on the example of $\wasserstein{2}(\initial,\sum_{\idx \in \idxset} 
\weight{\idx}\dirac{\bar{\sol}_\idx})$
from \eqref{ineq:splitting}.
For the given initial measure $\initial \in \cP(\Lp{2}{\physicalspace})$
we choose a probability space $(\Omega,\sigma,\P)$ and a random field
$\bar{u} \in \Lptwo{2}{\Omega}{\Lp{2}{\physicalspace}}$, such that
the law of $\bar{u}$ with respect to $\P$ coincides with $\bar{\mu}$.
We approximate the initial measure $\bar{\mu}$ using some empirical measure 
$\sum_{\idxtwo \in \idxsettwo} \weight{\idxtwo} \dirac{\bar{\sol}_\idxtwo}$,
for a second sample set $\idxsettwo:=\{1, \ldots,M \} $,
where the number of samples $M \gg K$ is significantly larger than the number 
of samples of the numerical approximation.
To distinguish between the two different sample sets $\idxset$ and $\idxsettwo$, we write 
$\{\bar{u}^{\idxset}_\idx\}_{\idx \in \idxset}$, $\{\bar{u}^{\idxsettwo}_\idxtwo\}_{\idxtwo \in \idxsettwo }$
and $\{ \weight{\idx}^\idxset \}_{\idx \in \idxset}$, $\{ \weight{\idxtwo}^\idxsettwo \}_{\idxtwo \in \idxsettwo}$
respectively.
Finally, we collect the weights $\{ \weight{\idx}^\idxset \}_{\idx \in \idxset}$
in the vector $\vec{w}^\idxset$ and  $\{ \weight{\idxtwo}^\idxsettwo \}_{\idxtwo \in \idxsettwo}$
in the vector $\vec{w}^\idxsettwo$. 

Computing the optimal transport $\pi^*$ (and thus the 2-Wasserstein 
distance) between the two atomic measures can be formulated as the following linear program,
cf. \cite[(2.11)]{Peyre2019}
\begin{align} \label{def:discretetransport}
\pi^* = \operatorname{arg~min} 
\limits_{\pi \in \Pi\Big(
\sum_{\idx \in \idxset} \weight{\idx}^\idxset \dirac{\bar{\sol}^\idxset_\idx}, 
\sum_{\idxtwo \in \idxsettwo} \weight{\idxtwo}^\idxsettwo \dirac{\bar{\sol}^\idxsettwo_\idxtwo} \Big) }
\sum_{k=1}^K \sum_{m=1}^M \pi_{k,m} \mathbb{C}_{k,m},
\end{align}
where $\Pi\Big(
\sum_{\idx \in \idxset} \weight{\idx}^\idxset \dirac{\bar{\sol}^\idxset_\idx},
\sum_{\idxtwo \in \idxsettwo} \weight{\idxtwo}^\idxsettwo \dirac{\bar{\sol}^\idxsettwo_\idxtwo} \Big):=
\{ \pi \in \R^{K\times M}_+~|~ \pi \1{M} = \vec{w}^\idxsettwo   \text{ and } \pi^\top \1{K} = \vec{w}^\idxset   \}$ 
denotes the set of transport plans and $ \1{n} := (1,\ldots,1)^\top \in \R^n$.
Moreover, the entries of the cost matrix $\mathbb{C}\in \R^{K\times M}_+$ are computed as 
\begin{align*}
\mathbb{C}_{k,m} = \| \bar{u}_k^\idxset- \bar{u}_m^\idxsettwo \|_{\Lp{2}{\physicalspace}}^2, \quad \text{for all } k=1,\ldots,K,~
m=1,\ldots,M.
\end{align*}
The linear program is solved using the network simplex algorithm \cite{Bonneel2011,Peyre2019}, 
implemented in the optimal transport library \texttt{ot.emd} \cite{Flamary2017} in python.
The 2-Wasserstein distance is finally approximated by
\begin{align} \label{eq:compwasserstein}
\wasserstein{2}\Big(\sum_{\idx \in \idxset} 
  \weight{\idx}\dirac{\bar{\sol}_\idx}, \initial\Big)^2 \approx 
\wasserstein{2}\Big(
\sum_{\idx \in \idxset} \weight{\idx}^\idxset \dirac{\bar{\sol}^\idxset_\idx},
\sum_{\idxtwo \in \idxsettwo} \weight{\idxtwo}^\idxsettwo \dirac{\bar{\sol}^\idxsettwo_\idxtwo}
\Big)^2.
\end{align}
where \eqref{eq:compwasserstein}
can be computed with \texttt{ot.emd2($\vec{w}^\idxset,\vec{w}^\idxsettwo,\mathbb{C}$)}.
\subsection{A numerical experiment} \label{sec:anumexample}
This section is devoted to the numerical study of the scaling properties of 
the individual terms in the a posteriori error estimate \thmref{thm:apoststatsol}.
In the following experiment we consider as instance of \eqref{eq:conslaw}
 the one-dimensional compressible Euler equations for the flow of an ideal gas, which
are given by
\begin{equation} \label{eq:euler}
\begin{alignedat}{2}
\hspace*{2cm}
\deriv{t} \rho &+ \deriv{x} m &&= 0 ,\\
\deriv{t} m &+ \deriv{x} \left(\frac{m^2}{\rho} + p\right)  &&=0,\\
\deriv{t} E &+ \deriv{x} \left( (E + p) \,\frac{m}{\rho}\right) &&  = 0,\hspace*{2cm}
\end{alignedat}
\end{equation}
where $\rho$ describes the mass density, $m$ the momentum and $E$ the energy of the gas. 
The constitutive law for pressure $p$ reads
$$p = (\gamma-1)\left(E - \frac{1}{2}\frac{m^2}{\rho}\right),$$
with the adiabatic constant $\gamma=1.4$.
We construct a smooth exact solution of  \eqref{eq:euler} by introducing 
an additional source term. 
The exact solution reads as follows
\begin{align} \label{eq:detbenchmark}
	\begin{pmatrix}
	\rho(t,x,\omega) \\
	 m(t,x,\omega)  \\
	E(t,x,\omega)  
	\end{pmatrix}=
	\begin{pmatrix}
	2+ 0.2\cdot\xi(\omega)\cdot\cos(6\pi(x-t)) \\[0.1cm]
	\Big(2+ 0.2\cdot \xi(\omega)\cdot\cos(6\pi(x-t)) \Big)\Big(1+0.2\cdot\xi(\omega)\cdot\sin(6\pi(x-t))\Big) \\
	\Big(2+ 0.2\cdot\xi(\omega)\cdot\cos(6\pi(x-t)) \Big)^2
	\end{pmatrix},
\end{align}
where $\xi \sim \mathcal{U}(0,1)$ is a uniformly distributed random variable. Moreover,
the spatial domain is $[0,1]_{\text{per}}$ 
and we compute the solution up to $T=0.2$.
We sample the initial measure $\bar{\mu}$ and the exact measure $\mu_T$ with 10000 samples.

For the remaining part of this section we introduce the notations
\begin{align} 
  & \detresidual(s):= \convexsum  \Bigg[ 
   \stinttime{|\residualsample{\idx}|^2}{s}\Bigg], \label{def:detresidual} \\ 
  &\detinitial:=\sum_{\idx \in \idxset}  \weight{\idx} 
 \|\bar{\sol}_\idx - \initialreconstsample{\idx} \|_{\cF}^2, \\ 
 &\stochinitial:= \wasserstein{2}(\initial,\sum_{\idx \in \idxset} 
  \weight{\idx}\dirac{\bar{\sol}_\idx})^2 \label{def:stochinitial}.
\end{align}
Moreover, when we refer to error, we mean the Wasserstein distance between exact and 
numerically computed density $\rho$ at time $t=T$.

As numerical solver we use the RKDG Code Flexi \cite{Hindenlang2012}.  
The DG polynomial degrees
will always be two and for the time-stepping we use a low storage SSP RK-method of order 
three as in \cite{Ketcheson2008}. The time-reconstruction is  also of order three.
As numerical fluxes we choose the Lax-Wendroff numerical flux
\begin{equation} \label{eq:LW}
  F(u,v):= f(w(u,v)),\quad w(u,v):= \frac{1}{2}\Big((u+v)+ \frac{\Delta t}{h}(f(v)-f(u))\Big),
\end{equation}
Computing $\detresidual, \detinitial, \stochinitial$ 
requires computing integrals, we approximate them via
Gau\ss-Legendre quadrature where we use seven points in each time-cell and 
ten points in every spatial cell. 

\subsubsection{Spatial refinement}
In this section we examine the scaling properties of $\detresidual(T)$ and 
$\detinitial$ when we gradually increase the number of spatial cells. We start initially
on a coarse mesh with 16 elements, i.e., $h=1/16$
and upon each refinement 
we subdivide each spatial cell uniformly into two cells.
To examine a possible influence of the stochastic resolution we consider a coarse and 
fine sampling with 100 and 1000 samples, respectively.
\tabref{table:hrefine_coarse} and \figref{fig:spatialref_coarse} display the error estimator parts, $\detresidual$, $\stochinitial$ and 
$\detinitial$ from \eqref{def:detresidual}-\eqref{def:stochinitial} when we compute the numerical 
approximation with 100 samples and \tabref{table:hrefine_fine} and \figref{fig:spatialref_fine} display the same quantities 
for 1000 samples. 
We observe that the convergence of $\detresidual$ and $\detinitial$ does not depend on the 
stochastic resolution. Indeed, both quantities decay when we increase the number of spatial elements and they are uniform with respect to changes in the sample size.
Moreover, $\detinitial$ exhibits the expected order of convergence which is six for 
a DG polynomial degree of two (note that we compute squared quantities). 
It can also be observed that $\detresidual$ converges with order five, which is due to a
suboptimal rate of convergence on the first time-step.
This issue has also been discussed in detail in \cite[Remark 4.1]{GiesselmannMeyerRohdeSG19}.
Furthermore, we see that the numerical error remains almost constant upon mesh refinement,
since it is dominated by the stochastic resolution error, which is described by $\stochinitial$.
Since $\stochinitial$ reflects the stochastic error it also remains constant upon spatial mesh refinement.
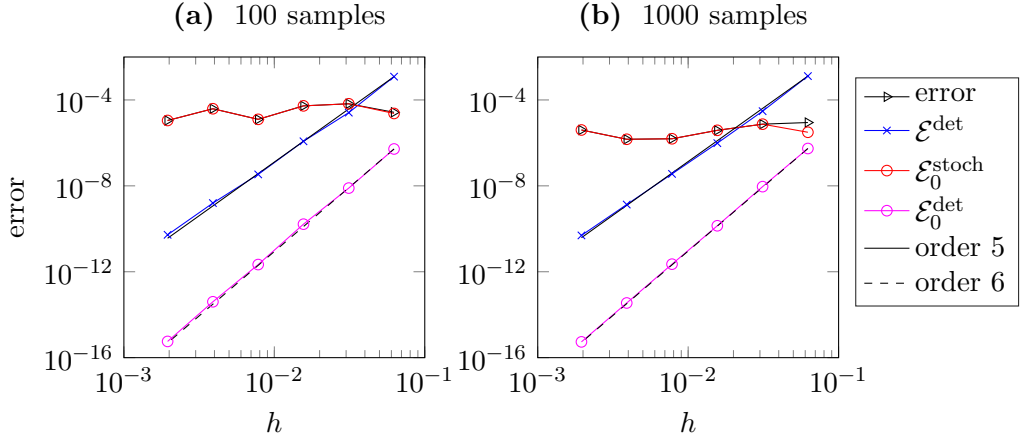
\begin{figure}
  \settikzlabel{fig:spatialref_coarse}
  \settikzlabel{fig:spatialref_fine}
%
%
\definecolor{mycolor1}{rgb}{1.00000,0.00000,1.00000}%
\begin{tikzpicture}

\begin{groupplot}[
group style={group size=2 by 1, horizontal sep = 1.5cm,  vertical sep = 2cm},
width=4cm,
height=4cm,
scale only axis,
xmode=log,
xmin=0.001,
xmax=0.1,
xminorticks=true,
ymode=log,
ymin=1e-16,
ymax=0.01,
yminorticks=true,
axis background/.style={fill=white},
]
\nextgroupplot[
title=\tikztitle{100 samples},
ylabel={error},
xlabel={$h$},
]

\addplot [color=black, mark=triangle, mark options={solid, rotate=270, black}]
  table[row sep=crcr]{%
0.0625	2.66104226043282e-05\\
0.03125	6.62264511276079e-05\\
0.015625	5.30670957019757e-05\\
0.0078125	1.25056658114709e-05\\
0.00390625	3.89273469829138e-05\\
0.001953125	1.10714429432115e-05\\
};

\addplot [color=blue, mark=x, mark options={solid, blue}]
  table[row sep=crcr]{%
0.0625	0.00121783719199383\\
0.03125	2.57104819671727e-05\\
0.015625	1.18697153288183e-06\\
0.0078125	3.41419766921585e-08\\
0.00390625	1.57546690871117e-09\\
0.001953125	5.25369778857946e-11\\
};

\addplot [color=red, mark=o, mark options={solid, red}]
  table[row sep=crcr]{%
0.0625	2.34808939127105e-05\\
0.03125	6.63617123331803e-05\\
0.015625	5.30535699874401e-05\\
0.0078125	1.25059185151764e-05\\
0.00390625	3.89272994095795e-05\\
0.001953125	1.10714419743215e-05\\
};

\addplot [color=mycolor1, mark=o, mark options={solid, mycolor1}]
  table[row sep=crcr]{%
0.0625	5.21246018083948e-07\\
0.03125	7.86538379143158e-09\\
0.015625	1.64278568026167e-10\\
0.0078125	2.16166554166052e-12\\
0.00390625	4.00047808220443e-14\\
0.001953125	5.69584025776697e-16\\
};

\addplot [color=black]
  table[row sep=crcr]{%
0.0625	0.00121783719199383\\
0.001953125	3.62943766115256e-11\\
};

\addplot [color=black, dashed]
  table[row sep=crcr]{%
0.0625	5.21246018083948e-07\\
0.001953125	4.85448183570009e-16\\
};

\nextgroupplot[
title=\tikztitle{1000 samples},
legend style={at={(1.6,0.17)}, anchor=south east,,legend cell align=left, align=left, draw=white!15!black},
xlabel={$h$},
]

\addplot [color=black, mark=triangle, mark options={solid, rotate=270, black}]
  table[row sep=crcr]{%
0.0625	8.88785265312534e-06\\
0.03125	7.45735385175831e-06\\
0.015625	3.85154048127712e-06\\
0.0078125	1.55818534167835e-06\\
0.00390625	1.47711754741208e-06\\
0.001953125	3.9934752381533e-06\\
};
\addlegendentry{error}

\addplot [color=blue, mark=x, mark options={solid, blue}]
  table[row sep=crcr]{%
0.0625	0.00129156309948217\\
0.03125	3.00152110962814e-05\\
0.015625	9.75433640685351e-07\\
0.0078125	3.60732917408606e-08\\
0.00390625	1.34780612318896e-09\\
0.001953125	4.91740720418174e-11\\
};
\addlegendentry{$\detresidual$}

\addplot [color=red, mark=o, mark options={solid, red}]
  table[row sep=crcr]{%
0.0625	3.12892891122152e-06\\
0.03125	7.44984385175831e-06\\
0.015625	3.85307586011641e-06\\
0.0078125	1.55820991846122e-06\\
0.00390625	1.47711952640374e-06\\
0.001953125	3.99347590479016e-06\\
};
\addlegendentry{$\stochinitial$}

\addplot [color=mycolor1, mark=o, mark options={solid, mycolor1}]
  table[row sep=crcr]{%
0.0625	5.48475669458395e-07\\
0.03125	9.19286613620045e-09\\
0.015625	1.38421133385901e-10\\
0.0078125	2.25406425852047e-12\\
0.00390625	3.51498474994718e-14\\
0.001953125	5.43001921615807e-16\\
};
\addlegendentry{$\detinitial$}

\addplot [color=black]
  table[row sep=crcr]{%
0.0625	0.00129156309948217\\
0.001953125	3.84915798748185e-11\\
};
\addlegendentry{order 5}

\addplot [color=black, dashed]
  table[row sep=crcr]{%
0.0625	5.48475669458395e-07\\
0.001953125	5.10807772593942e-16\\
};
\addlegendentry{order 6}

\end{groupplot}
\end{tikzpicture}%
 \caption{Spatial refinement for 100 and 1000 samples.}
 \label{fig:spatialref}
\end{figure}

\begin{table}[htb!]
\centering
\pgfplotstabletypeset[
    col sep=comma,
    string type,
    every head row/.style={%
        before row={\hline
            \multicolumn{5}{|c|}{$h$-refinement, 100 samples}  \\
            \hline
        },
        after row=\hline
    },
    every last row/.style={after row=\hline},
    columns={h,error,residual,errorsample,errorreconst},
    columns/h/.style={column name=$h$, column type={|c}},
    columns/error/.style={column name=error, column type={|c}},
    columns/residual/.style={column name=$\detresidual$, column type={|c}},
        columns/errorsample/.style={column name=$\stochinitial$, column type={|c}},
    columns/errorreconst/.style={column name=$\detinitial$, column type={|c|}},
    ]{hrefine_coarse.csv}       
    \caption{Spatial refinement for 100 samples.}
    \label{table:hrefine_coarse}
\end{table}
\begin{table}[htb!]
\centering
\pgfplotstabletypeset[
    col sep=comma,
    string type,
    every head row/.style={%
        before row={\hline
            \multicolumn{5}{|c|}{$h$-refinement, 1000 samples}  \\
            \hline
        },
        after row=\hline
    },
    every last row/.style={after row=\hline},
    columns={h,error,residual,errorsample,errorreconst},
    columns/h/.style={column name=$h$, column type={|c}},
    columns/error/.style={column name=error, column type={|c}},
    columns/residual/.style={column name=$\detresidual$, column type={|c}},
        columns/h/.style={column name=$h$, column type={|c}},
    columns/errorsample/.style={column name=$\stochinitial$, column type={|c}},
    columns/errorreconst/.style={column name=$\detinitial$, column type={|c|}},
    ]{hrefine_fine.csv}      
    \caption{Spatial refinement for 1000 samples.}
    \label{table:hrefine_fine}
\end{table}

\subsubsection{Stochastic refinement}
In this section, we consider stochastic refinement, i.e., we increase the number of
samples and keep the spatial resolution fixed. Similarly to the numerical example in 
the previous section we consider a very coarse spatial discretization with 8 elements 
(\tabref{table:stochrefine_coarse}, \figref{fig:stochref_coarse}) and a fine 
spatial discretization with 256 elements (\tabref{table:stochrefine_fine}, \figref{fig:stochref_fine}).
We observe that 
$\detresidual$ and $\detinitial$ remain constant when we increase the number of 
samples. This is the correct behavior, since  both residuals reflect 
spatio-temporal errors.
For the coarse spatial discretization we observe that the numerical error does not 
decrease when increasing the the number of samples since the spatial discretization error,
reflected by $\detresidual$, dominates 
the numerical error.
For the fine spatial discretization the numerical error is only dominated by 
the stochastic resolution error and thus the error converges with the same order.
We observe an experimental rate of convergence of one (resp.~one half after taking the square root).
Finally, we see that $\stochinitial$ is independent of the spatial discretization 
because its convergence is not altered by the spatio-temporal resolution.
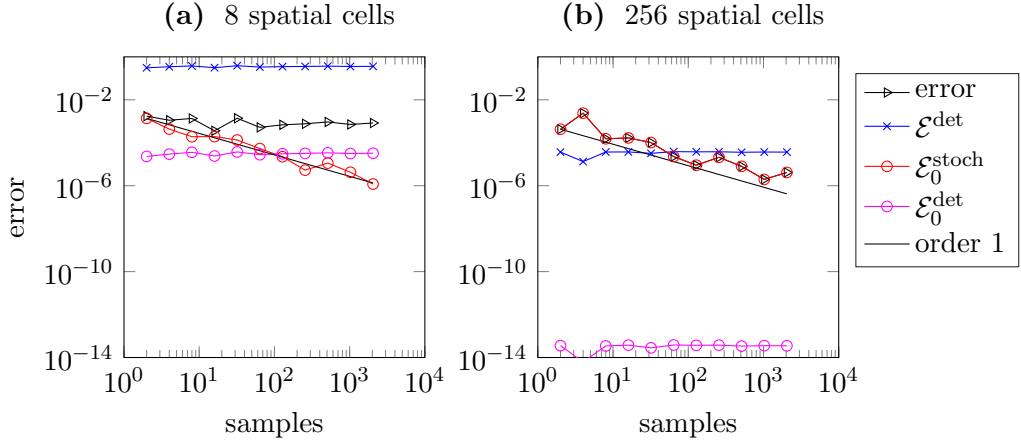
\begin{figure}
 \settikzlabel{fig:stochref_coarse}
  \settikzlabel{fig:stochref_fine}
%
%
\definecolor{mycolor1}{rgb}{1.00000,0.00000,1.00000}%
\begin{tikzpicture}
\begin{groupplot}[
group style={group size=2 by 1, horizontal sep = 1.5cm,  vertical sep = 2cm},
width=4cm,
height=4cm,
scale only axis,
xmode=log,
xmin=1,
xmax=10000,
xminorticks=true,
ymode=log,
]
\nextgroupplot[
title=\tikztitle{8 spatial cells},
ylabel={error},
xlabel={samples},
ymin=1e-14,
ymax=1,
]

\addplot [color=black, mark=triangle, mark options={solid, rotate=270, black}]
  table[row sep=crcr]{%
2	0.00172676538756674\\
4	0.00111673624521827\\
8	0.00135222015010615\\
16	0.000360846819282033\\
32	0.00138392243998916\\
64	0.000521725546981906\\
128	0.00068473207000943\\
256	0.000763415158032183\\
512	0.000930308692873636\\
1024	0.000713172554031996\\
2048	0.000830343783094905\\
};

\addplot [color=blue, mark=x, mark options={solid, blue}]
  table[row sep=crcr]{%
2	0.309554019541155\\
4	0.346375705024806\\
8	0.37836516779559\\
16	0.313110060544952\\
32	0.387577588687347\\
64	0.334538327457291\\
128	0.350877885036243\\
256	0.357701854901239\\
512	0.369205263627951\\
1024	0.356203712093963\\
2048	0.361596268607594\\
};

\addplot [color=red, mark=o, mark options={solid, red}]
  table[row sep=crcr]{%
2	0.0013644067347396\\
4	0.000432794415738105\\
8	0.000191061457990438\\
16	0.000193180211922747\\
32	0.000134341775156289\\
64	5.33675684415726e-05\\
128	2.24161170025128e-05\\
256	5.29634169508424e-06\\
512	1.12923534129487e-05\\
1024	4.22939865274445e-06\\
2048	1.19691402661534e-06\\
};

\addplot [color=mycolor1, mark=o, mark options={solid, mycolor1}]
  table[row sep=crcr]{%
2	2.30845070808604e-05\\
4	2.95223079197578e-05\\
8	3.59519634868733e-05\\
16	2.40979191894801e-05\\
32	3.74214832583436e-05\\
64	2.77498268806148e-05\\
128	3.05424125309543e-05\\
256	3.18651352626559e-05\\
512	3.39304694092756e-05\\
1024	3.1577983474331e-05\\
2048	3.25453557166899e-05\\
};

\addplot [color=black]
  table[row sep=crcr]{%
2	0.0013644067347396\\
2048	1.33242845189414e-06\\
};

\nextgroupplot[
title=\tikztitle{256 spatial cells},
xlabel={samples},
ymin=1e-14,
ymax=1,
legend style={at={(1.6,0.3)}, anchor=south east,,legend cell align=left, align=left, draw=white!15!black},
]

\addplot [color=black, mark=triangle, mark options={solid, rotate=270, black}]
  table[row sep=crcr]{%
2	0.000426580486614769\\
4	0.00238576905372699\\
8	0.000153227720119202\\
16	0.000167711656267952\\
32	0.000102239709621287\\
64	2.28806916078901e-05\\
128	9.01575657648186e-06\\
256	2.10619309691602e-05\\
512	7.96710376815913e-06\\
1024	1.96626386082958e-06\\
2048	4.13783311341525e-06\\
};
\addlegendentry{error}

\addplot [color=blue, mark=x, mark options={solid, blue}]
  table[row sep=crcr]{%
2	3.67627257231304e-05\\
4	1.33019708628757e-05\\
8	3.70376772015998e-05\\
16	3.73385148372904e-05\\
32	3.22229727256914e-05\\
64	3.86335016402972e-05\\
128	3.77541142019539e-05\\
256	3.80559854416849e-05\\
512	3.60373766511412e-05\\
1024	3.70038841870323e-05\\
2048	3.67325185822362e-05\\
};
\addlegendentry{$\detresidual$}

\addplot [color=red, mark=o, mark options={solid, red}]
  table[row sep=crcr]{%
2	0.00042658045884964\\
4	0.00238576920256541\\
8	0.000153227723791781\\
16	0.000167711624186403\\
32	0.000102239764930706\\
64	2.2880662531467e-05\\
128	9.01573884254241e-06\\
256	2.10619024088997e-05\\
512	7.9671188738358e-06\\
1024	1.96625827215431e-06\\
2048	4.13783147568438e-06\\
};
\addlegendentry{$\stochinitial$}

\addplot [color=mycolor1, mark=o, mark options={solid, mycolor1}]
  table[row sep=crcr]{%
2	3.57638728874961e-14\\
4	6.20862985886712e-15\\
8	3.45372615210264e-14\\
16	3.75820153868133e-14\\
32	2.82385556978422e-14\\
64	3.82318239142781e-14\\
128	3.69712643072684e-14\\
256	3.75665921441444e-14\\
512	3.40107788798905e-14\\
1024	3.55786942070047e-14\\
2048	3.50735144871531e-14\\
};
\addlegendentry{$\detinitial$}

\addplot [color=black]
  table[row sep=crcr]{%
2	0.00042658045884964\\
2048	4.16582479345352e-07\\
};
\addlegendentry{order 1}

\end{groupplot}

\end{tikzpicture}%
  \caption{Stochastic refinement for 8 and 256 spatial cells.}
  \label{fig:stochrefine}
\end{figure}

\begin{table}[htb!]
\centering
\pgfplotstabletypeset[
    col sep=comma,
    string type,
    every head row/.style={%
        before row={\hline
            \multicolumn{5}{|c|}{stochastic refinement, $h=1/8$}  \\
            \hline
        },
        after row=\hline
    },
    every last row/.style={after row=\hline},
    columns={h,error,residual,errorsample,errorreconst},
    columns/h/.style={column name=samples, column type={|c}},
    columns/error/.style={column name=error, column type={|c}},
    columns/residual/.style={column name=$\detresidual$, column type={|c}},
    columns/errorsample/.style={column name=$\stochinitial$, column type={|c}},
    columns/errorreconst/.style={column name=$\detinitial$, column type={|c|}},
    ]{stochrefine_coarse.csv}   
    \caption{Stochastic refinement for 8 spatial cells.}
    \label{table:stochrefine_coarse}
\end{table}

\begin{table}[htb!]
\centering
\pgfplotstabletypeset[
    col sep=comma,
    string type,
    every head row/.style={%
        before row={\hline
            \multicolumn{5}{|c|}{stochastic refinement, $h=1/256$}  \\
            \hline
        },
        after row=\hline
    },
    every last row/.style={after row=\hline},
    columns={h,error,residual,errorsample,errorreconst},
    columns/h/.style={column name=samples, column type={|c}},
    columns/error/.style={column name=error, column type={|c}},
    columns/residual/.style={column name=$\detresidual$, column type={|c}},
    columns/errorsample/.style={column name=$\stochinitial$, column type={|c}},
    columns/errorreconst/.style={column name=$\detinitial$, column type={|c|}},
    ]{stochrefine_fine.csv}       
    \caption{Stochastic refinement for 256 spatial cells.}
    \label{table:stochrefine_fine}
\end{table}

\section{Conclusions}
This work provides a first rigorous a posteriori error estimate for 
numerical approximations of dissipative statistical solutions in one spatial dimension.
Our numerical approximations rely on so-called regularized empirical measures,
which enable us to use the relative entropy
method of Dafermons and DiPerna \cite{Dafermos2016} within the framework of 
dissipative statistical solutions introduced by the authors of \cite{FjordholmLyeMishraWeber2019}.
We derived a splitting of the error estimator into a stochastic and a spatio-temporal part.
In addition, we provided a numerical example verifying this splitting. Moreover, our numerical results
confirm that the quantities that occur in our a posteriori error estimate 
decay with the expected order of convergence.

Further work should focus on a posteriori error control for multi-dimensional systems 
of hyperbolic conservation laws, especially the correct extension of the space-time 
reconstruction to two and three spatial dimensions.
Moreover, based on the a posteriori error estimate it is 
possible to design reliable space-time-stochastic numerical
schemes for the approximation of dissipative statistical solutions. 
For random conservation laws and classical $L^2(\Omega;L^2(D))$-estimates, the
stochastic part of the error estimator is given by the discretization error in the initial data and
an additional stochastic residual which occurs during the evolution, see for example \cite{GiesselmannMeyerRohdeSC2019,GiesselmannMeyerRohdeSG19}.
For dissipative statistical solutions the stochastic part of the error estimator in the 2-Wasserstein distance
is directly related to the stochastic discretization error of the initial data which may be amplified in
time due to nonlinear effects.
This result shows that stochastic adaptivity for dissipative statistical solutions becomes
significantly easier compared to random conservation laws since only
stochastic discretization errors  of the initial data (and their proliferation) need to be controlled.
The design of space-stochastic adaptive numerical schemes based on this observation will be the subject of further research. 

\bibliographystyle{siam}
\bibliography{bibliography}
\end{document}